\documentclass[11pt]{article}

\usepackage[a4paper,margin=1in]{geometry}
\usepackage{amsmath,amssymb,amsfonts,amsthm}
\usepackage{mathrsfs}
\usepackage{mathtools}
\usepackage{enumitem}
\usepackage{hyperref}
\usepackage{color}
\usepackage{fancyhdr}

\hypersetup{
    colorlinks=true,
    linkcolor=blue,
    citecolor=blue,
    urlcolor=blue
}

\newtheorem{theorem}{Theorem}[section]
\newtheorem{proposition}[theorem]{Proposition}
\newtheorem{lemma}[theorem]{Lemma}
\newtheorem{corollary}[theorem]{Corollary}

\theoremstyle{definition}
\newtheorem{definition}[theorem]{Definition}
\newtheorem{convention}[theorem]{Convention}
\newtheorem{setting}[theorem]{Setting}

\theoremstyle{remark}

\newcommand{\gr}{\operatorname{gr}}
\newcommand{\md}{\operatorname{mod}}
\newcommand{\Gproj}{\operatorname{Gproj}}

\newcommand{\Hom}{\operatorname{Hom}}
\newcommand{\Ext}{\operatorname{Ext}}
\newcommand{\rad}{\operatorname{rad}}
\newcommand{\pd}{\operatorname{pd}}
\newcommand{\id}{\operatorname{id}}

\newcommand{\Coker}{\operatorname{Coker}}
\newcommand{\Tr}{\operatorname{Tr}}
\newcommand{\grl}{\operatorname{gr.l.}}
\newcommand{\lb}{\underline{\delta}}
\newcommand{\ub}{\overline{\delta}}
\newcommand{\Z}{\mathbb Z}
\newcommand{\cN}{\mathcal N}
\newcommand{\cI}{\mathcal I}
\newcommand{\cC}{\mathcal C}
\newcommand{\GG}{G_{\mathcal G}}

\title{A Gordon--Green-type result for Gorenstein-projective modules}

\author{
A. M. Bouhada and M. Laaraj
}

\date{}

\begin{document}

\maketitle
\pagestyle{fancy}


\begin{abstract}
Let \(\Lambda\) be a \(\Z\)-graded Artin algebra which is
Iwanaga--Gorenstein. We prove that \(\Lambda\) is CM-finite if and
only if the category of gradable Gorenstein-projective modules has
finite type, and if and only if the lengths of the indecomposable
gradable Gorenstein-projective modules are bounded. Moreover, if
\(\Lambda\) is CM-finite, then every Gorenstein-projective
\(\Lambda\)-module is gradable. These results provide
Gorenstein-projective analogues of the corresponding theorems of
Gordon--Green for graded Artin algebras of finite representation
type.
\end{abstract}
\medskip
\noindent\textbf{Keywords:}
Gorenstein-projective modules; graded Artin algebras; gradable modules;
Cohen--Macaulay finite type; Iwanaga--Gorenstein algebras.

\medskip

\noindent\textbf{2020 Mathematics Subject Classification.}
Primary 16G50; Secondary 16G60, 16G70, 16E65.

\begin{center}
\section{Introduction}
\end{center}

Let \(\Lambda\) be a graded Artin algebra. Gordon and Green
\cite{GG2} studied \(\Lambda\) through the full subcategory
\(\md_\infty(\Lambda)\) of \(\md\Lambda\) consisting of gradable
modules. Among their main results, they proved that \(\Lambda\) has
finite representation type if and only if
\(\md_\infty(\Lambda)\) has finite representation type
\cite[Theorem~4.5]{GG2}, and that finite representation type implies
that every \(\Lambda\)-module is gradable
\cite[Theorem~4.3]{GG2}.

Finite Cohen--Macaulay type and Gorenstein-projective modules have
been studied from several complementary points of view. Chen
\cite{Ch} obtained a Gorenstein-projective analogue of Auslander's
theorem on algebras of finite representation type. Beligiannis
\cite{Be05,Be} developed a systematic study of virtually Gorenstein
algebras and algebras of finite Cohen--Macaulay type. Further
results on CM-finite Gorenstein algebras and constructions of
Gorenstein-projective modules were obtained by Li and Zhang
\cite{LiZ1,LiZ2}, while Kong and Zhang \cite{KZ} investigated
relative Auslander algebras associated with CM-finite algebras. In
the graded setting, Minamoto and Yamaura \cite{MY} studied finitely
graded Iwanaga--Gorenstein algebras and the stable categories of
their graded Cohen--Macaulay modules.

Explicit descriptions are available for important classes of
algebras. In particular, Chen, Shen and Zhou \cite{CSZ} classified
the indecomposable non-projective Gorenstein-projective modules over
a monomial algebra in terms of perfect paths. More recently, Honma and Usui \cite{HU} described both the stable category of Gorenstein-projective modules and the stable category of graded Gorenstein-projective modules over a monomial algebra, together with the relation between them via the orbit category with respect to the degree-shift functor. These results
illustrate the close interaction between grading and
Gorenstein-projective representation theory.

The question considered here is different. We study how
CM-finiteness interacts with gradability and ask whether the
finiteness phenomena established by Gordon and Green for ordinary
modules continue to hold inside the Gorenstein-projective category.
Recall that \(\Lambda\) is \emph{CM-finite} if
\(\Gproj\Lambda\) has only finitely many indecomposable objects up
to isomorphism.

The following main theorem shows that the Gordon--Green finiteness
phenomenon extends to the Gorenstein-projective setting.

\begin{theorem}[Main Theorem]\label{thm:intro}
Let \(\Lambda\) be a graded Artin algebra which is
Iwanaga--Gorenstein. The following conditions are equivalent:
\begin{enumerate}[label=\textup{(\arabic*)}]
\item \(\Lambda\) is CM-finite.
\item The category \(\Gproj_\infty(\Lambda)\) of gradable
Gorenstein-projective modules has finite type.
\item The lengths of the indecomposable gradable
Gorenstein-projective modules are uniformly bounded.
\item \(\GG<\infty\) and \(\Gproj_d(\Lambda)\) has finite type for
every \(d\geq0\).
\end{enumerate}

Moreover, if \(\Lambda\) is CM-finite, then every
Gorenstein-projective \(\Lambda\)-module is gradable. Consequently,
the ungraded stable category of Gorenstein-projective modules can be
realized as the orbit category of the graded stable category of
graded Gorenstein-projective modules with respect to the degree-shift
functor \(\langle 1\rangle\). In particular, one has
\(
\underline{\Gproj}\Lambda
\simeq
\underline{\Gproj}^{\Z}\Lambda/\langle 1\rangle\
\).
\end{theorem}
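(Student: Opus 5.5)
We follow the Gordon--Green strategy \cite{GG2} and transport each step into \(\Gproj\Lambda\), using the standard fact that over a graded Iwanaga--Gorenstein algebra a module \(M\) is Gorenstein-projective if and only if it is a high syzygy. Here \(\GG\) denotes the bound relevant to graded indecomposable Gorenstein-projectives, and \(\Gproj_d(\Lambda)\) the gradable ones of graded length at most \(d\), following the Gordon--Green notation \(\md_d\).

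Step 1 is to set up the graded Gorenstein-projective objects. We show that for a graded \(M\), being Gorenstein-projective in \(\gr\Lambda\) is equivalent to being so in \(\md\Lambda\). The forgetful functor \(F\colon\gr\Lambda\to\md\Lambda\) is exact and preserves projectives, so \(\Ext^i_\Lambda(FM,\Lambda)=\bigoplus_j \Ext^i_{\gr}(M,\Lambda\langle j\rangle)\). Hence \(\Gproj_\infty(\Lambda)\) is exactly the image under \(F\) of \(\Gproj^{\Z}\Lambda\). Every indecomposable gradable module has an indecomposable graded lift, unique up to shift, by Gordon--Green.

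Step 2 gives the easy implications.
\begin{itemize}
\item (1)\(\Rightarrow\)(2) is trivial.
\item (2)\(\Rightarrow\)(3) follows since there are finitely many lengths.
\item (3)\(\Rightarrow\)(4): bounded length bounds the graded length, so \(\GG<\infty\). Finite type of each \(\Gproj_d\) then follows because a graded Gorenstein-projective of bounded length, concentrated in a fixed window of degrees, is determined by finite-dimensional data. I expect to prove this via the Gordon--Green finiteness for modules of bounded graded length over the finitely many degree pieces; this is the point needing the most care.
\item (4)\(\Rightarrow\)(2) holds because every indecomposable graded Gorenstein-projective, after shift, lies in \(\Gproj_{\GG}\).
\end{itemize}

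Step 3 is the main obstacle: (2)\(\Rightarrow\)(1) together with gradability of all Gorenstein-projectives. The plan mimics Gordon--Green's Harada--Sai / Auslander argument inside \(\Gproj\Lambda\). By Chen \cite{Ch}, \(\Gproj\Lambda\) is functorially finite with almost split sequences. Let \(\mathcal C=\Gproj_\infty(\Lambda)\), which has finitely many indecomposables \(X_1,\dots,X_n\) and contains \(\Lambda\).

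First we show that \(\mathcal C\) is closed under the ends of almost split sequences in \(\Gproj\Lambda\). Taking a graded Gorenstein-projective \(X\), its graded almost split sequence in \(\Gproj^{\Z}\Lambda\) remains almost split after forgetting the grading. This is the Gorenstein analogue of the Gordon--Green theorem, obtained by realizing \(\Gproj^{\Z}\) as a Frobenius category with Auslander--Reiten translate \(\Omega\tau\) commuting with \(F\). Consequently \(\mathcal C\) is a finite union of components of the AR quiver of \(\Gproj\Lambda\) containing all projectives.

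Then Auslander's connectedness argument, in the Gorenstein version proved by Chen, gives \(\mathcal C=\Gproj\Lambda\). Concretely, any indecomposable \(Y\in\Gproj\Lambda\) admits a nonzero map from some \(X_i\) or to some projective. Harada--Sai bounds on \(\mathcal C\), whose lengths are bounded by (3), force \(Y\) to be reached by finitely many irreducible maps, hence \(Y\in\mathcal C\). This yields CM-finiteness and gradability of every Gorenstein-projective.

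Finally, for the orbit category statement, the functor \(\underline{\Gproj}^{\Z}\Lambda/\langle1\rangle\to\underline{\Gproj}\Lambda\) is fully faithful via the \(\Hom\) decomposition of Step 1, passed to stable Homs. It is dense precisely because every Gorenstein-projective is gradable.
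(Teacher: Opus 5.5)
Your Step~1, the implications \textup{(1)}$\Rightarrow$\textup{(2)}$\Rightarrow$\textup{(3)} and \textup{(4)}$\Rightarrow$\textup{(2)}, and the orbit-category argument are fine. The trouble is that your cycle reconnects \textup{(3)} to the other conditions only through \textup{(3)}$\Rightarrow$\textup{(4)}, and the finite-type half of that step is not an argument. Bounded length in a fixed window of degrees gives no finiteness of isomorphism classes. For example, take the exterior algebra over an infinite field $k$ on two generators $x,y$ of degree $1$; it is self-injective, so every module is Gorenstein-projective. Its graded modules concentrated in degrees $0,1$ are just Kronecker representations. The modules $M_\lambda=k\oplus k$, with $x$ acting by $1$ and $y$ by $\lambda$, are infinitely many pairwise non-isomorphic length-$2$ objects of $\Gproj_1(\Lambda)$. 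More decisively, for the trivial grading $\Lambda=\Lambda_0$ one has $\Gproj_0(\Lambda)=\Gproj\Lambda$. So your step contains \textup{(3)}$\Rightarrow$\textup{(1)}, a Brauer--Thrall~I statement for $\Gproj\Lambda$, which is the whole content of the theorem. Reorganize as the paper does: get \textup{(2)}$\Rightarrow$\textup{(4)} from \textup{(GG9)} and $\Gproj_d(\Lambda)\subseteq\Gproj_\infty(\Lambda)$, note the easy \textup{(4)}$\Rightarrow$\textup{(3)}, and prove \textup{(3)}$\Rightarrow$\textup{(1)} directly. A Harada--Sai argument needs only the length bound on gradable modules, not finiteness.

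That direct step is where your Step~3 has a real gap. Everything rests on knowing that, for each indecomposable gradable $B\in\Gproj\Lambda$, some right almost split morphism in $\Gproj\Lambda$ ending at $B$ has gradable domain, and you only assert this. Your justification is that almost split sequences of $\Gproj^{\Z}\Lambda$ stay almost split under $F$ because the translate commutes with $F$. That is insufficient: a non-split sequence with the correct end terms need not be almost split. Also, the lifting property has to be checked against possibly non-gradable $W\in\Gproj\Lambda$, where no homogeneous decomposition is available.

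Worse, your Harada--Sai argument starts from a nonzero map $Y\to P$ with $P$ projective. The right almost split map into $P$ in $\Gproj\Lambda$ is $\rad P\hookrightarrow P$ precomposed with a right $\Gproj\Lambda$-approximation of $\rad P$. It belongs to no almost split sequence of $\Gproj\Lambda$, since $P$ is Ext-projective there. So closure under ends of almost split sequences does not make $\cC$ a union of components at exactly the vertices you start from.

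The missing idea is the graded Auslander--Buchweitz approximation (Lemma~\ref{lem:leftapprox}, Proposition~\ref{prop:approx}). Compose the graded lift $\vartheta'$ of the right almost split map of $\md\Lambda$ (Lemma~\ref{lem:gradedras}) with a graded approximation $\pi'$ of its domain. Then $F(\vartheta'\pi')$ is right almost split in $\Gproj\Lambda$ with gradable domain, uniformly for projective and non-projective $B$. This works because the lifting property for arbitrary $W$ is inherited from $\md\Lambda$ and from the approximation property. No minimality, no almost split sequences in $\Gproj\Lambda$, and no connectedness are needed. Chains of components of these maps, the expansion lemma and Harada--Sai then finish. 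Also, the component argument you attribute to Chen is Beligiannis's relative version; Chen's theorem concerns infinitely generated Gorenstein-projectives.
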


The notation appearing in \textup{(4)} is introduced in
Section~\ref{sec:proof}. When \(\Lambda\) is self-injective, every
finitely generated \(\Lambda\)-module is Gorenstein-projective, and
the theorem recovers the corresponding finiteness phenomena of
Gordon and Green.

\medskip
\noindent\textbf{Strategy of the proof.}
We combine right almost split morphisms in $\md\Lambda$ with
Gorenstein-projective approximations. In the graded setting, these
approximations are chosen so that gradability is preserved, yielding
right almost split morphisms in $\Gproj\Lambda$ whose domains are
gradable whenever their targets are. Iterating the indecomposable
components of these morphisms produces chains in $\Gproj\Lambda$,
along which gradability propagates from the last term to all
preceding terms. Under the bounded-length hypothesis in
\textup{(3)}, an expansion argument, together with the Harada--Sai
lemma, shows that every indecomposable Gorenstein-projective
module is, up to isomorphism, the initial term of a chain of
uniformly bounded length ending at an indecomposable projective
module. This yields both CM-finiteness and the gradability of
all Gorenstein-projective modules.

Our approach is inspired by Auslander's use of the Harada--Sai
lemma to show that a component of the Auslander--Reiten quiver
consisting of modules of bounded length contains all
indecomposable modules \cite[Chapter~VI]{ARS}; a relative version
appears in \cite[Remark~3.2(ii)]{Be}. Unlike the approach of
Gordon and Green \cite{GG2}, our proof uses neither Auslander's
theorem on components nor the connectedness of $\Lambda$.
The essential additional ingredient in our setting is the
propagation of gradability along the chosen chains, which
allows the length bound to be imposed only on gradable modules.

The same argument also provides a short proof of the results
of Gordon and Green \cite[Theorems~4.3 and 4.5]{GG2}.
Indeed, working in $\md\Lambda$ instead of $\Gproj\Lambda$,
neither Gorenstein-projective approximations nor the
Gorenstein hypothesis are needed. Moreover, the chains may
be taken to end at simple modules rather than projective ones,
since every nonzero module admits a nonzero morphism onto a
simple module, and there are only finitely many simple modules,
all of which are gradable.


\begin{center}
\section{Proof of the Main Theorem}\label{sec:proof}
\end{center}

Before giving the proof, we recall the notation and auxiliary
results that will be used throughout this section.


\subsection{Graded Artin algebras}

Throughout, \(\Lambda\) is a graded Artin algebra and all modules are
finitely generated left modules. We write \(\ell(M)\) for the
length of a module \(M\) and \(\Omega M\) for the kernel of a
projective cover.

Let \(\Lambda=\bigoplus_{i\in\Z}\Lambda_i\) be a graded Artin
algebra. We denote by \(\gr\Lambda\) the category of finitely
generated graded \(\Lambda\)-modules with degree-preserving
morphisms, and by \(F\colon\gr\Lambda\to\md\Lambda\) the forgetful
functor. For \(X\in\gr\Lambda\) and \(i\in\Z\), denote by
\(X\langle i\rangle\) the grading shift of \(X\), defined by
\((X\langle i\rangle)_n=X_{n-i}\).

A module \(M\in\md\Lambda\) is \emph{gradable} if \(M\cong FX\) for
some \(X\in\gr\Lambda\). We denote by \(\md_\infty(\Lambda)\) the
full subcategory of gradable modules.

For \(0\neq X\in\gr\Lambda\), let \(\lb X\) and \(\ub X\) denote,
respectively, the least and largest integers \(n\) for which
\(X_n\neq0\), and put \(\grl X=\ub X-\lb X+1\). For \(d\geq0\), let
\(\gr^d(\Lambda)\) be the full subcategory of \(\gr\Lambda\)
consisting of \(0\) and those \(X\neq0\) satisfying
\(\lb X\geq0\) and \(\ub X\leq d\). Put
\(\md_d(\Lambda)=F(\gr^d(\Lambda))\), with closure under isomorphism
understood.

We shall use the following standard results of Gordon--Green
\cite{GG1,GG2}.

\begin{proposition}\label{prop:GG}
Let \(X,Y\in\gr\Lambda\).
\begin{enumerate}[label=\textup{(GG\arabic*)}]
\item\label{GG:exact}
The functor \(F\) is exact and faithful and reflects exactness.

\item\label{GG:indec}
The module \(X\) is indecomposable if and only if \(FX\) is
indecomposable. Moreover, direct summands and finite direct sums of
gradable modules are gradable.

\item\label{GG:shift}
If \(X\) and \(Y\) are indecomposable, then \(FX\cong FY\) if and
only if \(X\cong Y\langle i\rangle\) for some \(i\in\Z\).

\item\label{GG:hom}
Every morphism \(FX\to FY\) is a finite sum of homogeneous
morphisms. A degree-\(e\) morphism \(FX\to FY\) is the image under
\(F\) of a degree-preserving morphism
\(X\to Y\langle -e\rangle\).

\item\label{GG:rad}
We have \(F(\rad X)=\rad FX\).

\item\label{GG:cover}
The functor \(F\) sends graded projective covers to projective
covers and graded injective envelopes to injective envelopes. In
particular, \(F(\Omega^nX)\cong\Omega^n(FX)\) for all \(n\geq0\).

\item\label{GG:DTr}
There is a graded Auslander--Reiten translate such that
\(F(D\Tr X)\cong D\Tr(FX)\).

\item\label{GG:AR}
For every indecomposable non-injective \(X\in\gr\Lambda\), there is
an almost split sequence starting at \(X\), and \(F\) sends almost
split sequences in \(\gr\Lambda\) to almost split sequences in
\(\md\Lambda\).

\item\label{GG:length}
If \(X\) is indecomposable and \(\ell(FX)\leq K\), then
\(\grl X\leq K\grl\Lambda\).
\end{enumerate}
\end{proposition}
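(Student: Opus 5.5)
Since Proposition~\ref{prop:GG} collects known results of Gordon--Green \cite{GG1,GG2}, the plan is to recall the standard argument for each item, building mainly on two facts. The first is that \(\rad\Lambda\) is a homogeneous ideal of the graded Artin algebra \(\Lambda\). The second is that for \(X,Y\in\gr\Lambda\) with \(X\) finitely generated,
\[
\Hom_\Lambda(FX,FY)=\bigoplus_{e\in\Z}\Hom_{\gr\Lambda}(X,Y\langle -e\rangle).
\]
This decomposition is exactly \ref{GG:hom}. To prove it, choose homogeneous generators of \(X\), split the image of each generator into homogeneous components, and check that each component defines a homogeneous morphism. Only finitely many degrees occur, since \(X\) and \(Y\) have finite graded length.

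Item \ref{GG:exact} is then immediate: a sequence in \(\gr\Lambda\) is exact if and only if it is exact in each degree, and its image under \(F\) is the direct sum of these degree pieces.

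For \ref{GG:indec}, set \(E=\operatorname{End}_\Lambda(FX)\), graded by the decomposition above with \(E_0=\operatorname{End}_{\gr\Lambda}(X)\).
\begin{itemize}
\item Every homogeneous \(f\in E_e\) with \(e\neq0\) is nilpotent, since \(f^n\) shifts degrees by \(ne\) and \(X\) is supported on finitely many degrees.
\item If \(X\) is indecomposable, then \(E_0\) is local. Together with the nilpotence above, the ideal \(\rad E_0\oplus\bigoplus_{e\neq0}E_e\) is a nilpotent ideal. It is the graded radical, and the quotient of \(E\) by it is the division ring \(E_0/\rad E_0\). Hence \(E\) is local and \(FX\) is indecomposable.
\item Conversely, a graded decomposition of \(X\) gives a decomposition of \(FX\).
\item Summands of gradable modules are gradable: decompose \(X\) in \(\gr\Lambda\) into indecomposables and apply Krull--Schmidt in \(\md\Lambda\).
\end{itemize}

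For \ref{GG:shift}, take an isomorphism \(\varphi\colon FX\to FY\) with inverse \(\psi\), and write both as sums of homogeneous components. The identity \(1=\psi\varphi\) has degree \(0\), so \(1=\sum_e\psi_{-e}\varphi_e\). Since \(\operatorname{End}(FX)\) is local, some term \(\psi_{-e}\varphi_e\) is invertible. Symmetrically, some \(\varphi_{e'}\psi_{-e'}\) is invertible in \(\operatorname{End}(FY)\), and a short argument with the locality of both rings allows one to take \(e'=e\). Then \(\varphi_e\) is a graded isomorphism \(X\to Y\langle -e\rangle\).

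Item \ref{GG:rad} follows from \(\rad X=(\rad\Lambda)X\) together with the homogeneity of \(\rad\Lambda\). For \ref{GG:cover}, graded projectives are summands of finite sums of shifts \(\Lambda\langle i\rangle\), so \(F\) sends them to projectives. A graded projective cover \(P\to X\) induces an isomorphism \(P/\rad P\cong X/\rad X\). By \ref{GG:rad} the same holds after applying \(F\), which is minimality; iterating gives the statement for \(\Omega^n\). Dually, one uses the graded duality \(D\) to treat injective envelopes. Item \ref{GG:DTr} follows by forming \(\Tr\) from a graded minimal projective presentation, which \(F\) preserves by \ref{GG:cover}, and applying the graded duality, which commutes with \(F\).

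For \ref{GG:AR}, the category \(\gr\Lambda\) is Hom-finite, Krull--Schmidt, and carries a graded Auslander--Reiten formula, which gives graded almost split sequences \(0\to X\to E\to D\Tr^{-}X\to0\). Let \(\sigma\) be such a sequence. Its image \(F\sigma\) is non-split with indecomposable end terms, by \ref{GG:exact} and \ref{GG:indec}. To see that it is almost split, take a non-split-mono morphism \(g\colon FX\to FM\) with \(M\) indecomposable, which suffices since gradable summands reduce to this case. Decompose \(g\) into homogeneous pieces \(g_e\), viewed by \ref{GG:hom} as graded maps \(X\to M\langle -e\rangle\). No \(g_e\) is a split mono, since otherwise \(FX\cong FM\) by \ref{GG:shift} and the locality argument would make \(g\) split. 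Hence each \(g_e\) factors through \(X\to E\), and therefore so does \(g\). For non-gradable \(M\) one would need more care; I expect to reduce to the gradable case using that the defining property can be tested on \(\rad(FX,-)\) via the socle of \(\Ext^1\), with \(\Ext^1(D\Tr^-FX,FX)\) graded.

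Finally, for \ref{GG:length}, suppose the support of \(X\) contains a gap of at least \(\grl\Lambda\) consecutive zero degrees. Then the parts of \(X\) in degrees below the gap and above it are \(\Lambda\)-submodules: \(\Lambda\) cannot carry elements across the gap, since a product of one homogeneous element of \(\Lambda\) with an element of \(X\) moves degree by less than \(\grl\Lambda\). Hence \(X\) splits as their direct sum, contradicting indecomposability. So \(X\) has at most \(\ell(FX)\le K\) nonzero degrees, separated by gaps shorter than \(\grl\Lambda\), which gives \(\grl X\le K\grl\Lambda\).

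I expect \ref{GG:AR} to be the main obstacle, specifically checking the almost split property of \(F\sigma\) against arbitrary, possibly non-gradable, test modules. \ref{GG:shift} is the other delicate point, because of the matching of degrees \(e'=e\).
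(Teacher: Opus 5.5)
The paper gives no proof of this proposition; it quotes it from Gordon--Green \cite{GG1,GG2}, so your reconstruction has to stand on its own. The routine items (\ref{GG:exact}, \ref{GG:hom}--\ref{GG:DTr}) are fine. However, \ref{GG:AR} has a genuine gap, which you flag yourself: you check the almost split property of \(F\sigma\) only against gradable test modules. The paper cannot do with only that case. In Proposition~\ref{prop:rasGproj} and Lemma~\ref{lem:expansion}, the map \(F\vartheta'\) is tested against arbitrary Gorenstein-projective modules, and their gradability is the \emph{conclusion} of Proposition~\ref{prop:key}.

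Your hint does work and should be carried out. Let \(\sigma\colon 0\to A\to B\to C\to0\) be almost split in \(\gr\Lambda\), so \(FA\cong D\Tr FC\) by \ref{GG:DTr}.
The class \([F\sigma]\in\Ext^1_\Lambda(FC,FA)\) is nonzero. Indeed, the degree-\(0\) component of a retraction of \(FA\to FB\) would be a graded retraction; this uses \ref{GG:hom}, not \ref{GG:exact} and \ref{GG:indec}.
Now take \(h\in\rad\operatorname{End}_\Lambda(FC)\). Each homogeneous component \(h_e\) is the image of a graded non-isomorphism \(C\langle e\rangle\to C\). For \(e\neq0\) this holds because \(C\not\cong C\langle e\rangle\). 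For \(e=0\) it holds because \(h_0\in\rad\operatorname{End}_{\gr\Lambda}(C)\), by your description of \(\rad\operatorname{End}_\Lambda(FC)\). Hence each \(h_e\) factors through \(B\to C\), and \([F\sigma]h=0\).
So \([F\sigma]\) is a nonzero element of \(\Ext^1_\Lambda(FC,D\Tr FC)\) annihilated by \(\rad\operatorname{End}_\Lambda(FC)\); that is, it lies in the simple socle. The nonzero socle elements are exactly the almost split sequences \cite[Chapter~V]{ARS}, so this handles all test modules at once.

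Your argument for \ref{GG:length} contains a false step. An indecomposable \(X\) can have more than \(\ell(FX)\) nonzero homogeneous components once \(\Lambda\) has nonzero negative-degree part. For example, take \(\Lambda=M_2(k)\) with \(e_{11},e_{22}\) in degree \(0\), \(e_{12}\) in degree \(1\) and \(e_{21}\) in degree \(-1\). The simple module \(\Lambda e_{11}\) then lives in degrees \(0\) and \(-1\).
The gap-splitting idea is right, but you should count generators instead of degrees. A graded composition series has \(t\le\ell(FX)\) steps, so \(X=\sum_{j\le t}\Lambda x_j\) with each \(x_j\) homogeneous. Then \(\operatorname{supp}X\subseteq\bigcup_j(\deg x_j+[\lb\Lambda,\ub\Lambda])\). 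If this union of \(t\) intervals were not an interval, splitting the generators into those below and those above a missing degree would decompose \(X\). Hence \(\grl X\le t\grl\Lambda\).

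Two smaller points:
\begin{enumerate}
\item In \ref{GG:indec}, showing that \(\rad E_0\oplus\bigoplus_{e\neq0}E_e\) is even an ideal requires \(E_eE_{-e}\subseteq\rad E_0\) for \(e\neq0\). Otherwise \(X\) would be a graded summand of \(X\langle -e\rangle\), which is impossible by comparing supports. Its nilpotence needs a pigeonhole argument on partial degree sums; nilpotence of the homogeneous elements alone is not enough.
\item In \ref{GG:shift}, once \(\psi_{-e}\varphi_e\) is invertible, \(\varphi_e\) is a split monomorphism into the indecomposable \(Y\langle -e\rangle\), hence an isomorphism. No matching \(e'=e\) is needed.
\end{enumerate}
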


Since \(\Lambda\) is an Artin algebra, only finitely many
homogeneous components \(\Lambda_i\) are nonzero. Hence
\(\grl\Lambda<\infty\).


\subsection{Right almost split maps and Gorenstein-projective modules}

Let \(\cC\) be a full subcategory of \(\md\Lambda\). A morphism
\(\vartheta\colon Y\to B\) with \(Y,B\in\cC\) is
\emph{right almost split in \(\cC\)} if it is not a split
epimorphism and every morphism \(W\to B\) in \(\cC\) which is not a
split epimorphism factors through \(\vartheta\).

Every indecomposable \(B\in\md\Lambda\) admits a right almost split
map. If \(B\) is non-projective, this is the right-hand morphism of
the almost split sequence ending at \(B\); if \(B\) is projective,
the inclusion \(\rad B\hookrightarrow B\) is right almost split
\cite[Chapter~V]{ARS}.

We shall use the following form of the Harada--Sai lemma.

\begin{lemma}[Harada--Sai {\cite{HS}}]\label{lem:HS}
Let \(b\geq1\). A composite of \(2^b-1\) non-isomorphisms between
indecomposable modules of length at most \(b\) is zero.
\end{lemma}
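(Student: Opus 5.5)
The plan is the classical length-doubling argument (as in \cite[Chapter~VI]{ARS}). Fix \(b\geq1\) and work by induction on the number \(n\) of maps in the composite. We prove the stronger statement: if \(f_1\colon M_1\to M_2,\dots,f_n\colon M_n\to M_{n+1}\) are non-isomorphisms between indecomposable modules of length at most \(b\), then
\[
\ell\bigl(\operatorname{Im}(f_n\cdots f_1)\bigr)\leq b-k \qquad\text{whenever } n\geq 2^k-1 .
\]
Taking \(k=b\) and \(n=2^b-1\) gives image of length \(0\), which is the lemma.

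For the base case \(k=1\), \(n=1\), note that a single non-isomorphism \(f\) between indecomposables of length at most \(b\) cannot have image of length \(b\). If it did, \(\ell(M_1)=\ell(M_2)=b\), and \(f\) would be a monomorphism onto \(M_2\), hence an isomorphism.

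For the inductive step, write a composite of \(2^{k+1}-1\) maps as \(h\circ f\circ g\). Here \(g\) is the composite of the first \(2^k-1\) maps, \(f\) is the middle map \(M\to N\), and \(h\) is the composite of the last \(2^k-1\) maps. By induction, \(\operatorname{Im} g\) and \(\operatorname{Im} h\) have length at most \(b-k\).

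If either image has length strictly smaller than \(b-k\), we are done, since
\[
\ell\bigl(\operatorname{Im}(hfg)\bigr)\leq \min\bigl(\ell(\operatorname{Im}g),\ell(\operatorname{Im}h)\bigr).
\]
So assume both have length exactly \(b-k\), and suppose also that \(\operatorname{Im}(hfg)\) has length \(b-k\). Then \(h\) restricted to \(f(\operatorname{Im} g)\) is injective, and \(hf\) maps \(\operatorname{Im} g\) isomorphically onto \(\operatorname{Im}(hfg)\). Since \(\operatorname{Im}(hfg)\subseteq\operatorname{Im} h\) and the two have equal length, \(\operatorname{Im}(hfg)=\operatorname{Im} h\).

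The key step is to derive a contradiction from this configuration. Set \(U=f(\operatorname{Im} g)\subseteq N\). By the above, \(h|_U\colon U\to \operatorname{Im} h\) is an isomorphism, so \(N=U\oplus \operatorname{Ker} h\): the section is \((h|_U)^{-1}\) composed with \(h\colon N\to\operatorname{Im}h\).

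Since \(N\) is indecomposable and \(U\neq0\) (we may take \(b-k>0\)), we get \(\operatorname{Ker} h=0\) and \(U=N\). The same argument applied on the source side, with \(hf\) restricted to \(\operatorname{Im} g\subseteq M\) an isomorphism onto its image, shows \(M=\operatorname{Im} g\oplus\operatorname{Ker}(hf)\). Hence \(\operatorname{Ker}(hf)=0\) and \(\operatorname{Im} g=M\).

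Thus \(f\colon M\to N\) is surjective (since \(U=N\)) and injective (since \(hf\) is injective), so \(f\) is an isomorphism. This contradicts the hypothesis, and the induction is complete.

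I expect the main obstacle to be exactly this splitting step: checking that equality of lengths forces the restricted maps to be isomorphisms and gives the direct sum decompositions, and then using indecomposability on both sides of \(f\). The rest is bookkeeping with \(2^{k+1}-1=2(2^k-1)+1\).
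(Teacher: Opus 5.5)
The paper gives no proof of this lemma; it is quoted from Harada--Sai, with Auslander's treatment in \cite[Chapter~VI]{ARS} as the model, and your argument is exactly that standard length-halving induction (the bound $\ell(\operatorname{Im}(f_{2^k-1}\cdots f_1))\leq b-k$, the split $2^{k+1}-1=2(2^k-1)+1$, and the two splittings $M=\operatorname{Im} g\oplus\operatorname{Ker}(hf)$ and $N=U\oplus\operatorname{Ker} h$ that force the middle map to be an isomorphism), so it is correct. The only slip is terminological: $(h|_U)^{-1}\circ h\colon N\to U$ is a retraction of $U\hookrightarrow N$, not a section, and the analogous retraction $M\to\operatorname{Im} g$ is well defined because you have already shown $\operatorname{Im}(hf)\subseteq\operatorname{Im} h=\operatorname{Im}(hfg)$.
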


We write \(\Gproj\Lambda\) for the category of finitely generated
Gorenstein-projective \(\Lambda\)-modules.

Assume that \(\Lambda\) is Iwanaga--Gorenstein and let
\(d=\id{}_\Lambda\Lambda\). We shall use the following standard
facts \cite{Bu,EJ}:
\begin{enumerate}[label=\textup{(G\arabic*)}]
\item\label{G:char}
A module \(M\) is Gorenstein-projective if and only if
\(\Ext^i_\Lambda(M,\Lambda)=0\) for all \(i\geq1\).

\item\label{G:syz}
For every \(M\in\md\Lambda\), one has
\(\Omega^dM\in\Gproj\Lambda\).

\item\label{G:closed}
The category \(\Gproj\Lambda\) contains the projective modules and
is closed under extensions, finite direct sums and direct summands.

\item\label{G:embed}
Every \(G\in\Gproj\Lambda\) embeds into a projective module.

\item\label{G:orth}
If \(G\in\Gproj\Lambda\) and \(\pd L<\infty\), then
\(\Ext^i_\Lambda(G,L)=0\) for all \(i\geq1\).
\end{enumerate}

\begin{definition}
Let \(\cC\) be a full subcategory of \(\md\Lambda\). A morphism
\(\pi\colon C\to M\) with \(C\in\cC\) is a
\emph{right \(\cC\)-approximation} of \(M\) if every morphism
\(C'\to M\) with \(C'\in\cC\) factors through \(\pi\).
\end{definition}

We shall use the Auslander--Buchweitz approximation: every
\(M\in\md\Lambda\) admits an exact sequence
\(0\to K\to G\to M\to0\) with \(G\in\Gproj\Lambda\) and
\(\pd K<\infty\). By \ref{G:orth}, the epimorphism \(G\to M\) is a
right \(\Gproj\Lambda\)-approximation \cite{AB}.

We now introduce the graded Gorenstein-projective category.

\begin{definition}\label{def:gradedGproj}
We denote by \(\Gproj^{\Z}\Lambda\) the full subcategory of
\(\gr\Lambda\) consisting of all \(X\) such that
\(\Ext^i_{\gr\Lambda}(X,\Lambda\langle j\rangle)=0\) for every \(i\geq1\)
and every \(j\in\Z\).
\end{definition}

\begin{proposition}\label{prop:gradedGproj}
For \(X\in\gr\Lambda\), one has \(X\in\Gproj^{\Z}\Lambda\) if and
only if \(FX\in\Gproj\Lambda\). Consequently,
\(F(\Gproj^{\Z}\Lambda)=\Gproj_\infty(\Lambda)\).
\end{proposition}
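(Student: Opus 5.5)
The plan is to compare graded and ungraded Ext groups with coefficients in \(\Lambda\). The key identity I would establish is
\[
\Ext^i_\Lambda(FX,F\Lambda)\;\cong\;\bigoplus_{j\in\Z}\Ext^i_{\gr\Lambda}(X,\Lambda\langle j\rangle)\qquad(i\geq0),
\]
after which the first assertion follows from the characterization \ref{G:char}. Indeed, \(FX\in\Gproj\Lambda\) iff \(\Ext^i_\Lambda(FX,\Lambda)=0\) for all \(i\geq1\). By the identity this holds iff every summand \(\Ext^i_{\gr\Lambda}(X,\Lambda\langle j\rangle)\) vanishes, which is exactly the definition of \(X\in\Gproj^{\Z}\Lambda\).

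To prove the identity, I would start with \(i=0\). By \ref{GG:hom}, every morphism \(FX\to FY\) is a finite sum of homogeneous ones, and a degree-\(e\) morphism is the image of a degree-preserving morphism \(X\to Y\langle -e\rangle\). Since \(F\) is faithful by \ref{GG:exact}, and homogeneous components of distinct degrees are independent, this gives
\[
\Hom_\Lambda(FX,FY)\cong\bigoplus_{j}\Hom_{\gr\Lambda}(X,Y\langle j\rangle).
\]
For higher \(i\), I would take a graded projective resolution \(P_\bullet\to X\) by finitely generated graded projectives, built from graded projective covers. By \ref{GG:exact} and \ref{GG:cover}, \(FP_\bullet\to FX\) is a projective resolution in \(\md\Lambda\). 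Applying the \(i=0\) isomorphism termwise with \(Y=\Lambda\) yields an isomorphism of complexes
\[
\Hom_\Lambda(FP_\bullet,\Lambda)\cong\bigoplus_j\Hom_{\gr\Lambda}(P_\bullet,\Lambda\langle j\rangle),
\]
natural in the resolution maps. Cohomology commutes with direct sums, so taking cohomology gives the identity; the right-hand side computes graded Ext, since graded projectives are projective in \(\gr\Lambda\). The main point needing care is this naturality together with the direct-sum decomposition. Because all modules are finitely generated, the sum over \(j\) is effectively finite in each degree, though exactness of direct sums makes this unnecessary anyway.

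For the consequence, let \(M\in\Gproj_\infty(\Lambda)\). Then \(M\) is Gorenstein-projective and \(M\cong FX\) for some \(X\in\gr\Lambda\). By the first part \(X\in\Gproj^{\Z}\Lambda\), so \(M\in F(\Gproj^{\Z}\Lambda)\), closure under isomorphism being understood. Conversely, if \(X\in\Gproj^{\Z}\Lambda\), then \(FX\) is gradable by definition and Gorenstein-projective by the first part, so \(FX\in\Gproj_\infty(\Lambda)\). Hence \(F(\Gproj^{\Z}\Lambda)=\Gproj_\infty(\Lambda)\).
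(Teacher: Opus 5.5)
Your proposal is correct and follows essentially the same route as the paper. Both arguments decompose \(\Hom_\Lambda(FP^n,\Lambda)\) homogeneously via \ref{GG:hom} along a graded projective resolution and pass to cohomology to obtain \(\Ext^i_\Lambda(FX,\Lambda)\cong\bigoplus_{j\in\Z}\Ext^i_{\gr\Lambda}(X,\Lambda\langle j\rangle)\), then conclude with \ref{G:char} and the definition of gradability.
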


\begin{proof}
Let \(P^\bullet\to X\) be a graded projective resolution. By
\ref{GG:hom}, for every \(n\), the module
\(\Hom_\Lambda(FP^n,\Lambda)\) is naturally isomorphic to
\(\bigoplus_{j\in\Z}
\Hom_{\gr\Lambda}(P^n,\Lambda\langle j\rangle)\).
The differentials preserve this homogeneous decomposition.
Consequently, for every \(i\geq0\),
\(\Ext^i_\Lambda(FX,\Lambda)\) is naturally isomorphic to
\(\bigoplus_{j\in\Z}
\Ext^i_{\gr\Lambda}(X,\Lambda\langle j\rangle)\).
The first assertion follows from \ref{G:char}, and the second from
the definition of gradability.
\end{proof}

By \ref{GG:indec} and \ref{GG:shift}, \(F\) induces a bijection
between the isomorphism classes of indecomposable objects of
\(\Gproj^{\Z}\Lambda\) modulo shifts and the isomorphism classes of
indecomposable objects of \(\Gproj_\infty(\Lambda)\).


\subsection{Graded right almost split maps}

\begin{lemma}\label{lem:gradedras}
Let \(C\in\gr\Lambda\) be indecomposable. Then there exists a
morphism \(\vartheta'\colon Y'\to C\) in \(\gr\Lambda\) such that
\(F\vartheta'\) is right almost split in \(\md\Lambda\).
\end{lemma}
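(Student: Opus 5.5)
We split into two cases according to whether \(FC\) is projective; in each case we write down an explicit graded morphism and check that its image under \(F\) is one of the standard right almost split maps recalled above. By \ref{GG:indec}, \(FC\) is indecomposable, so \(FC\) has a right almost split map in \(\md\Lambda\), of the two kinds described above.

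\emph{Case 1: \(FC\) is non-projective.} We first show that \(C\) is non-projective in \(\gr\Lambda\). If \(C\) were graded projective, then \(C\to C\) would be a graded projective cover, so \(FC\) would be projective by \ref{GG:cover}. By the graded analogue of the existence of almost split sequences (the dual form of \ref{GG:AR}, or \ref{GG:AR} applied to the graded translate \(D\Tr C\) from \ref{GG:DTr}), there is an almost split sequence \(0\to A\to Y'\xrightarrow{\vartheta'} C\to0\) in \(\gr\Lambda\). More concretely, \(D\Tr C\) is indecomposable and non-injective, and \ref{GG:AR} gives an almost split sequence starting at it, which must end at \(C\) up to isomorphism. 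By \ref{GG:AR}, \(F\) sends this sequence to an almost split sequence in \(\md\Lambda\). Hence \(F\vartheta'\) is the right-hand map of an almost split sequence ending at \(FC\), and is therefore right almost split.

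The step needing the most care is identifying the end term of the sequence produced by \ref{GG:AR} with \(C\). I would argue as follows. The sequence starting at \(D\Tr C\) ends at some indecomposable \(E\in\gr\Lambda\). Its image under \(F\) is almost split starting at \(D\Tr(FC)\), so \(FE\cong \Tr D\,D\Tr(FC)\cong FC\). Then \ref{GG:shift} gives \(E\cong C\langle i\rangle\) for some \(i\). Applying the shift \(\langle -i\rangle\), which preserves exactness and almost-split-ness in \(\gr\Lambda\), yields a sequence ending at \(C\) itself.

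\emph{Case 2: \(FC\) is projective.} Take \(Y'=\rad C\), the graded radical, i.e.\ the graded submodule \((\rad\Lambda)C\); this is graded because the radical of a graded Artin algebra is a graded ideal. Let \(\vartheta'\colon\rad C\hookrightarrow C\) be the inclusion, a degree-preserving morphism. By \ref{GG:rad}, \(F\vartheta'\) is the inclusion \(\rad FC\hookrightarrow FC\), which is right almost split in \(\md\Lambda\) because \(FC\) is indecomposable projective. This completes the plan, with Case 1's identification step as the only nonformal point.
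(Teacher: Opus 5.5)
Your proposal is correct and follows essentially the same route as the paper: in the projective case, the graded radical inclusion via \ref{GG:rad}; in the non-projective case, the graded almost split sequence starting at \(D\Tr C\), whose end term is identified with a shift of \(C\) via \ref{GG:shift}. The only step you assert without justification is that \(D\Tr C\) is indecomposable and non-injective; the paper obtains this from \(F(D\Tr C)\cong\tau(FC)\) together with \ref{GG:indec} and \ref{GG:cover}.
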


\begin{proof}
Suppose first that \(FC\) is projective. By \ref{GG:rad}, the image
under \(F\) of \(\rad C\hookrightarrow C\) is
\(\rad FC\hookrightarrow FC\), which is right almost split.

Suppose now that \(FC\) is non-projective and put \(X=D\Tr C\). By
\ref{GG:DTr}, \(FX\cong D\Tr(FC)=\tau(FC)\). Hence \(X\) is
indecomposable by \ref{GG:indec}. Moreover, \(X\) is not injective:
otherwise \(FX\) would be injective by \ref{GG:cover}, whereas
\(\tau(FC)\) is the first term of the almost split sequence ending
at \(FC\).

By \ref{GG:AR}, there is an almost split sequence
\(0\to X\to Y\xrightarrow{\alpha}Z\to0\) in \(\gr\Lambda\). Its
image under \(F\) is almost split in \(\md\Lambda\). Since
\(FX\cong\tau(FC)\), we have \(FZ\cong FC\). Both \(Z\) and \(C\)
are indecomposable, so \ref{GG:shift} gives
\(Z\cong C\langle i\rangle\) for some \(i\in\Z\). Shifting the sequence and
composing its right-hand morphism with an isomorphism
\(Z\langle -i\rangle\to C\) gives the required morphism.
\end{proof}


\subsection{Graded Gorenstein-projective approximations}

\begin{lemma}\label{lem:leftapprox}
Let \(X\in\Gproj^{\Z}\Lambda\). Then there is an exact sequence
\(0\to X\xrightarrow{\iota}Q\to X''\to0\) in \(\gr\Lambda\) such
that \(Q\) is projective in \(\gr\Lambda\) and
\(X''\in\Gproj^{\Z}\Lambda\).
\end{lemma}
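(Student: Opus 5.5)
The plan is to take \(\iota\) to be a graded left \(\mathrm{proj}\)-approximation of \(X\), and then to use the ungraded theory together with Proposition~\ref{prop:gradedGproj} to check that it has the required properties.

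\textbf{Construction.} Since \(\Lambda\) is an Artin algebra, \(\Hom_\Lambda(FX,\Lambda)\) is a finitely generated right \(\Lambda\)-module. By the decomposition used in the proof of Proposition~\ref{prop:gradedGproj}, it is the direct sum over \(j\in\Z\) of the spaces \(\Hom_{\gr\Lambda}(X,\Lambda\langle j\rangle)\). Only finitely many of these summands are nonzero, and I will choose finitely many homogeneous generators \(f_1,\dots,f_m\), with \(f_k\colon X\to\Lambda\langle j_k\rangle\). Put \(Q=\bigoplus_k\Lambda\langle j_k\rangle\) and \(\iota=(f_k)_k\colon X\to Q\). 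This \(Q\) is projective in \(\gr\Lambda\).

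\textbf{Approximation property.} Every morphism \(FX\to\Lambda\) is a sum of homogeneous pieces by \ref{GG:hom}. Each piece is a \(\Lambda\)-linear combination of the \(f_k\), so it factors through \(F\iota\). Hence \(F\iota\) is a left \(\operatorname{add}\Lambda\)-approximation of \(FX\), and therefore also a left approximation by all projective modules.

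\textbf{Injectivity of \(\iota\).} By \ref{G:embed}, \(FX\) embeds into some projective module \(P\). That embedding factors through \(F\iota\), so \(F\iota\) is injective. By \ref{GG:exact}, \(\iota\) is then a monomorphism in \(\gr\Lambda\). Let \(X''=\Coker\iota\), taken in \(\gr\Lambda\). Since \(F\) is exact, \(FX''\cong\Coker F\iota\).

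\textbf{Gorenstein-projectivity of \(X''\).} By Proposition~\ref{prop:gradedGproj}, it suffices to show \(FX''\in\Gproj\Lambda\), which by \ref{G:char} means \(\Ext^i_\Lambda(FX'',\Lambda)=0\) for \(i\geq1\). Apply \(\Hom_\Lambda(-,\Lambda)\) to \(0\to FX\to FQ\to FX''\to0\).
\begin{itemize}
\item For \(i\geq2\), the long exact sequence gives \(\Ext^i(FX'',\Lambda)\cong\Ext^{i-1}(FX,\Lambda)=0\), using \(\Ext^{\geq1}(FQ,\Lambda)=0\).
\item For \(i=1\), the group \(\Ext^1(FX'',\Lambda)\) is the cokernel of \(\Hom(FQ,\Lambda)\to\Hom(FX,\Lambda)\). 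This map is surjective by the approximation property, so \(\Ext^1(FX'',\Lambda)=0\).
\end{itemize}
Thus \(FX''\in\Gproj\Lambda\), so \(X''\in\Gproj^{\Z}\Lambda\).

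The main obstacle is the injectivity of \(\iota\). It rests on \ref{G:embed} together with the approximation property, and the approximation property in turn depends on the homogeneous decomposition of \ref{GG:hom}. Everything else is a routine long exact sequence argument.
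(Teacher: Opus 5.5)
Your proposal is correct and follows essentially the same route as the paper. You build \(\iota\) from finitely many homogeneous generators of \(\Hom_\Lambda(FX,\Lambda)\), get injectivity from \ref{G:embed} through the approximation property, and kill \(\Ext^{\geq1}_\Lambda(FX'',\Lambda)\) via the long exact sequence before invoking Proposition~\ref{prop:gradedGproj}. The only implicit step, shared with the paper, is that \(FX\in\Gproj\Lambda\) by Proposition~\ref{prop:gradedGproj}, which is what licenses using \ref{G:embed} and \(\Ext^{i-1}_\Lambda(FX,\Lambda)=0\).
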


\begin{proof}
Set \(H=\Hom_\Lambda(FX,\Lambda)\). Since \(\Lambda\) is Artinian,
\(H\) is finitely generated as a right \(\Lambda\)-module. By
\ref{GG:hom}, after replacing a finite generating set by its
homogeneous components, we may choose homogeneous generators
\(\varphi_1,\dots,\varphi_m\). Write
\(\varphi_k=F\varphi'_k\), where
\(\varphi'_k\colon X\to\Lambda\langle -e_k\rangle\).

Put \(Q=\bigoplus_{k=1}^m\Lambda\langle -e_k\rangle\) and
\(\iota=(\varphi'_k)_k\). By construction, every morphism
\(FX\to\Lambda\), and hence every morphism from \(FX\) to a
projective module, factors through \(F\iota\).

By \ref{G:embed}, there is a monomorphism \(u\colon FX\to P\) with
\(P\) projective. Since \(u\) factors through \(F\iota\), the
morphism \(F\iota\) is a monomorphism. Hence \(\iota\) is a
monomorphism by \ref{GG:exact}.

Let \(X''=\Coker\iota\). Applying \(\Hom_\Lambda(-,\Lambda)\) to
\(0\to FX\to FQ\to FX''\to0\), the surjectivity of
\(\Hom_\Lambda(FQ,\Lambda)\to\Hom_\Lambda(FX,\Lambda)\) gives
\(\Ext^1_\Lambda(FX'',\Lambda)=0\). For \(i\geq2\), one has
\(\Ext^i_\Lambda(FX'',\Lambda)\cong
\Ext^{i-1}_\Lambda(FX,\Lambda)=0\).
Thus \(FX''\in\Gproj\Lambda\) by \ref{G:char}. Proposition
\ref{prop:gradedGproj} gives \(X''\in\Gproj^{\Z}\Lambda\).
\end{proof}

\begin{proposition}[Graded Auslander--Buchweitz approximation]
\label{prop:approx}
For every \(M\in\gr\Lambda\), there is an exact sequence
\(0\to K\to G\xrightarrow{\pi}M\to0\) in \(\gr\Lambda\) such that
\(G\in\Gproj^{\Z}\Lambda\) and \(\pd FK<\infty\). Moreover, \(F\pi\)
is a right \(\Gproj\Lambda\)-approximation of \(FM\).
\end{proposition}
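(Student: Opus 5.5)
We build the graded approximation from a graded syzygy, mimicking the standard construction of the Auslander--Buchweitz approximation. Let \(d=\id{}_\Lambda\Lambda\). Take a graded projective resolution of \(M\) in \(\gr\Lambda\) and truncate it at step \(d\):
\[
0\to \Omega^d M\to P_{d-1}\to\cdots\to P_0\to M\to 0 .
\]
By \ref{GG:cover} and \ref{G:syz}, \(F(\Omega^dM)\cong\Omega^d(FM)\) is Gorenstein-projective. Proposition~\ref{prop:gradedGproj} then gives \(\Omega^dM\in\Gproj^{\Z}\Lambda\).

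We argue by induction on \(n\) that, for \(0\le n\le d\), the graded module \(N_n:=\Omega^{n}M\) admits an exact sequence \(0\to K_n\to G_n\to N_n\to 0\) in \(\gr\Lambda\) with \(G_n\in\Gproj^{\Z}\Lambda\) and \(\pd FK_n<\infty\). We proceed downward from \(n=d\), where we take \(K_d=0\) and \(G_d=N_d\). The case \(n=0\) is the claim.

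For the inductive step, we have a short exact sequence \(0\to N_{n+1}\to P_n\to N_n\to 0\) and, by hypothesis, \(0\to K_{n+1}\to G_{n+1}\to N_{n+1}\to0\).
\begin{enumerate}[label=\textup{(\roman*)}]
\item Lemma~\ref{lem:leftapprox} gives a graded exact sequence \(0\to G_{n+1}\xrightarrow{\iota} Q\to G''\to0\) with \(Q\) graded projective and \(G''\in\Gproj^{\Z}\Lambda\).
\item Since \(\Ext^1_{\gr\Lambda}(G'',P_n)=0\) (the case \(j\)-shift of Definition~\ref{def:gradedGproj}, using that \(P_n\) is a finite sum of shifts of summands of \(\Lambda\)), the composite \(G_{n+1}\to N_{n+1}\to P_n\) extends along \(\iota\) to a degree-preserving map \(Q\to P_n\). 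Equivalently, we can use the graded version of the surjectivity of \(\Hom(Q,P_n)\to\Hom(G_{n+1},P_n)\).
\item Form the pushout of \(0\to G_{n+1}\to Q\to G''\to0\) along \(G_{n+1}\to N_{n+1}\), which gives \(0\to N_{n+1}\to E\to G''\to0\). The extension to \(Q\to P_n\) then yields a graded map of extensions, so we obtain a diagram comparing \(E\) with \(P_n\).
\item Rather than chasing this, the cleaner route is the standard one. Take the pullback of \(P_n\to N_n\) and \(E'\) constructed as follows. Set \(G_n:=\) the pushout of \(Q\leftarrow G_{n+1}\to\) (nothing), i.e. \(G_n=Q\oplus_{G_{n+1}}P_n\) formed along \(\iota\) and the map \(G_{n+1}\to N_{n+1}\hookrightarrow P_n\). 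Then the map \(G_n\to N_n\) is induced by \(Q\xrightarrow{0}N_n\) and \(P_n\to N_n\).
\end{enumerate}
One checks, by the snake lemma in the abelian category \(\gr\Lambda\), the following three facts. First, \(0\to P_n\to G_n\to G''\to0\) is exact, so \(FG_n\) is an extension of Gorenstein-projectives and lies in \(\Gproj\Lambda\) by \ref{G:closed}; hence \(G_n\in\Gproj^{\Z}\Lambda\). Second, \(G_n\to N_n\) is surjective. Third, its kernel \(K_n\) fits into \(0\to K_{n+1}\to Q\to K_n\to 0\)-type sequences, so \(\pd FK_n\le \pd FK_{n+1}+1<\infty\). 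All of these are graded because every object and map is built inside \(\gr\Lambda\), and exactness transfers via \ref{GG:exact}.

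Finally, \(F\) applied to \(0\to K\to G\xrightarrow{\pi}M\to0\) is exact by \ref{GG:exact}, with \(FG\in\Gproj\Lambda\) and \(\pd FK<\infty\). For any \(G'\in\Gproj\Lambda\), \ref{G:orth} gives \(\Ext^1_\Lambda(G',FK)=0\). Hence \(\Hom(G',FG)\to\Hom(G',FM)\) is onto, so \(F\pi\) is a right \(\Gproj\Lambda\)-approximation.

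The main obstacle is the diagram chase in the inductive step: one must verify that the kernel \(K_n\) really has finite projective dimension. I expect \(K_n\) to sit in an exact sequence \(0\to K_{n+1}\to Q\to K_n\to0\). If that identification fails, I would instead read off \(\pd FK_n<\infty\) from the long exact sequence relating \(K_n\), \(K_{n+1}\), \(Q\) and \(P_n\).
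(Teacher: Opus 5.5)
Your proof is correct and is essentially the paper's: your single pushout $G_n=Q\oplus_{G_{n+1}}P_n$ coincides, by pasting of pushouts, with the paper's $L$. The paper builds $L$ in two steps, first pushing out $G'\to\Omega M$ along $\iota$ to get $N$ and then pushing out $\Omega M\to P$ along $\Omega M\to N$, and your downward induction from $\Omega^dM$ is just a relabelling of the paper's induction on $n$ with $\Omega^nM\in\Gproj^{\Z}\Lambda$. The identification you were unsure about does hold: $K_n$ is the intermediate pushout $Q\oplus_{G_{n+1}}N_{n+1}$ (the paper's $N$), and pushing out the epimorphism $G_{n+1}\to N_{n+1}$ along the monomorphism $\iota$ gives $0\to K_{n+1}\to Q\to K_n\to0$, so your steps (ii)--(iii) can simply be dropped.
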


\begin{proof}
We prove by induction on \(n\geq0\) that the assertion holds
whenever \(\Omega^nM\in\Gproj^{\Z}\Lambda\). This is sufficient by
\ref{GG:cover}, \ref{G:syz}, and Proposition
\ref{prop:gradedGproj}.

For \(n=0\), take \(G=M\) and \(K=0\). Let \(n\geq1\), and take a
graded projective cover \(0\to\Omega M\to P\to M\to0\). By
induction there is an exact sequence
\(0\to K'\to G'\to\Omega M\to0\) with
\(G'\in\Gproj^{\Z}\Lambda\) and \(\pd FK'<\infty\).
Lemma~\ref{lem:leftapprox} gives an exact sequence
\(0\to G'\to Q\to G''\to0\), where \(Q\) is graded projective and
\(G''\in\Gproj^{\Z}\Lambda\).

Forming the pushout of \(G'\to\Omega M\) and \(G'\to Q\) gives
exact sequences \(0\to K'\to Q\to N\to0\) and
\(0\to\Omega M\to N\to G''\to0\). Thus \(\pd FN<\infty\).
Pushing out \(\Omega M\to N\) along \(\Omega M\to P\) gives
\(0\to N\to L\to M\to0\) and \(0\to P\to L\to G''\to0\).
Hence \(FL\in\Gproj\Lambda\) by \ref{G:closed}, so
\(L\in\Gproj^{\Z}\Lambda\) by Proposition
\ref{prop:gradedGproj}.

Finally, if \(W\in\Gproj\Lambda\), then
\(\Ext^1_\Lambda(W,FN)=0\) by \ref{G:orth}. Hence every morphism
\(W\to FM\) lifts through \(FL\to FM\). Thus \(FL\to FM\) is a
right \(\Gproj\Lambda\)-approximation.
\end{proof}


\subsection{Right almost split morphisms in \(\Gproj\Lambda\)}

\begin{proposition}\label{prop:rasGproj}
Every indecomposable \(B\in\Gproj\Lambda\) admits a right almost split
morphism in \(\Gproj\Lambda\). If \(B\) is gradable, then such a
morphism may be chosen with gradable domain.
\end{proposition}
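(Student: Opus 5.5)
Let \(B\in\Gproj\Lambda\) be indecomposable, and let \(\vartheta\colon Y\to B\) be a right almost split map in \(\md\Lambda\): the inclusion \(\rad B\hookrightarrow B\) if \(B\) is projective, and the right-hand map of the almost split sequence otherwise. Take an Auslander--Buchweitz approximation \(\pi\colon G\to Y\), so \(G\in\Gproj\Lambda\) and \(\pi\) is a right \(\Gproj\Lambda\)-approximation. The candidate is \(\vartheta\pi\colon G\to B\).

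The first step is to check that \(\vartheta\pi\) is right almost split in \(\Gproj\Lambda\). It is not a split epimorphism, because otherwise \(\vartheta\) would be one. Let \(h\colon W\to B\) with \(W\in\Gproj\Lambda\) be a map that is not a split epimorphism. Then \(h\) factors as \(h=\vartheta g\) with \(g\colon W\to Y\), since \(\vartheta\) is right almost split in \(\md\Lambda\). Next, \(g\) factors through \(\pi\), since \(\pi\) is a right \(\Gproj\Lambda\)-approximation and \(W\in\Gproj\Lambda\). Hence \(h\) factors through \(\vartheta\pi\), which proves the first assertion.

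Now assume \(B\) is gradable, say \(B\cong FC\) with \(C\in\gr\Lambda\). By \ref{GG:indec}, \(C\) is indecomposable, and by Proposition~\ref{prop:gradedGproj}, \(C\in\Gproj^{\Z}\Lambda\). Lemma~\ref{lem:gradedras} provides \(\vartheta'\colon Y'\to C\) in \(\gr\Lambda\) with \(F\vartheta'\) right almost split in \(\md\Lambda\). Proposition~\ref{prop:approx}, applied to \(M=Y'\), gives \(\pi'\colon G'\to Y'\) in \(\gr\Lambda\) with \(G'\in\Gproj^{\Z}\Lambda\) and \(F\pi'\) a right \(\Gproj\Lambda\)-approximation of \(FY'\). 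Running the argument of the first step with \(\vartheta=F\vartheta'\) (composed with the isomorphism \(FC\cong B\)) and \(\pi=F\pi'\) shows that \(F(\vartheta'\pi')\) is right almost split in \(\Gproj\Lambda\). Its domain \(FG'\) is gradable, and lies in \(\Gproj\Lambda\) by Proposition~\ref{prop:gradedGproj}.

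I expect no real obstacle. The only points to verify are, first, that the first step uses nothing beyond the two factorization properties, so it transfers verbatim to the graded case; and second, that the graded approximation of Proposition~\ref{prop:approx} is a right approximation in the ungraded category \(\Gproj\Lambda\), which that proposition states explicitly.
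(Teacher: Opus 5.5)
Your proposal is correct and follows the paper's proof essentially step for step. You compose a right almost split map in \(\md\Lambda\) with a right \(\Gproj\Lambda\)-approximation, and in the gradable case you use Lemma~\ref{lem:gradedras} and Proposition~\ref{prop:approx}, composing with an isomorphism \(FC\cong B\). Your extra remark that \(C\in\Gproj^{\Z}\Lambda\) is harmless but not needed.
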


\begin{proof}
Choose a right almost split morphism
\(\vartheta\colon Y\to B\) in \(\md\Lambda\) and a right
\(\Gproj\Lambda\)-approximation \(\pi\colon E\to Y\). Put
\(\theta=\vartheta\pi\). If \(\theta\) were a split epimorphism,
then \(\vartheta\) would be a split epimorphism.

Let \(v\colon W\to B\), with \(W\in\Gproj\Lambda\), be a morphism
which is not a split epimorphism. Since \(\vartheta\) is right almost
split, \(v=\vartheta u\) for some \(u\colon W\to Y\). Since \(\pi\)
is a right \(\Gproj\Lambda\)-approximation, \(u=\pi u'\) for some
\(u'\colon W\to E\). Hence \(v=\theta u'\).

Suppose now that \(B\) is gradable. Choose an indecomposable
\(C\in\gr\Lambda\) and an isomorphism \(w\colon FC\to B\).
Lemma~\ref{lem:gradedras} gives
\(\vartheta'\colon Y'\to C\) such that \(F\vartheta'\) is right
almost split in \(\md\Lambda\). By Proposition~\ref{prop:approx},
there is \(\pi'\colon E'\to Y'\) with
\(E'\in\Gproj^{\Z}\Lambda\) and \(F\pi'\) a right
\(\Gproj\Lambda\)-approximation. The preceding argument shows that
\(wF(\vartheta'\pi')\colon FE'\to B\) is right almost split in \(\Gproj\Lambda\). Its domain is gradable.
\end{proof}

\begin{convention}\label{conv}
Fix a set \(\cI\) of representatives of the isomorphism classes of
indecomposable objects of \(\Gproj\Lambda\). For every \(B\in\cI\),
choose a right almost split morphism \(\theta_B\colon E_B\to B\) in
\(\Gproj\Lambda\), with
\(E_B\) gradable whenever \(B\) is gradable, and fix a decomposition
\(E_B=\bigoplus_{\ell=1}^{s_B}E_{B,\ell}\) into indecomposable
modules. Write \(\theta_{B,\ell}\colon E_{B,\ell}\to B\) for the
corresponding components.

For each \(1\leq\ell\leq s_B\), let
\(\widehat E_{B,\ell}\in\cI\) be the representative of the
isomorphism class of \(E_{B,\ell}\), and fix an isomorphism
\(v_{B,\ell}\colon\widehat E_{B,\ell}\to E_{B,\ell}\). Put
\(\widehat\theta_{B,\ell}
=\theta_{B,\ell}v_{B,\ell}\colon\widehat E_{B,\ell}\to B\).
\end{convention}

Thus all modules which occur in the chains considered below belong to the
fixed set \(\cI\).

\begin{lemma}\label{lem:noniso}
For every \(B\in\cI\) and every \(1\leq\ell\leq s_B\), the
morphism
\(\widehat\theta_{B,\ell}\colon\widehat E_{B,\ell}\to B\)
is not an isomorphism.
\end{lemma}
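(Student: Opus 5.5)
We argue by contradiction. Suppose that \(\widehat\theta_{B,\ell}\) is an isomorphism for some \(B\in\cI\) and some \(1\leq\ell\leq s_B\). Since \(v_{B,\ell}\) is an isomorphism, the component \(\theta_{B,\ell}\colon E_{B,\ell}\to B\) is then an isomorphism as well. We will show that this forces \(\theta_B\) to be a split epimorphism, which is impossible for a right almost split morphism.

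The key step is to exhibit a section of \(\theta_B\). Let \(\iota_\ell\colon E_{B,\ell}\to E_B\) denote the canonical inclusion of the \(\ell\)-th summand in the fixed decomposition \(E_B=\bigoplus_{\ell'} E_{B,\ell'}\). By definition of the components, \(\theta_B\iota_\ell=\theta_{B,\ell}\). Put
\[
s=\iota_\ell\,\theta_{B,\ell}^{-1}\colon B\to E_B .
\]
Then
\[
\theta_B s=\theta_B\iota_\ell\theta_{B,\ell}^{-1}=\theta_{B,\ell}\theta_{B,\ell}^{-1}=1_B ,
\]
so \(\theta_B\) is a split epimorphism.

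On the other hand, \(\theta_B\) was chosen in Convention~\ref{conv} to be right almost split in \(\Gproj\Lambda\), and by definition such a morphism is not a split epimorphism. This is the desired contradiction, so \(\widehat\theta_{B,\ell}\) is not an isomorphism.

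There is no real obstacle here. The only point needing care is the identification \(\theta_B\iota_\ell=\theta_{B,\ell}\), which is exactly how the components were defined in Convention~\ref{conv}. No gradability or Gorenstein hypothesis enters the argument.
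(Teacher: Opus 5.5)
Your proof is correct and follows the paper's argument exactly: if \(\widehat\theta_{B,\ell}\) were an isomorphism, then \(\theta_{B,\ell}=\widehat\theta_{B,\ell}v_{B,\ell}^{-1}\) would be one too, so \(\theta_B\) would be a split epimorphism, contradicting that it is right almost split. The only difference is that you write out the section \(\iota_\ell\theta_{B,\ell}^{-1}\) explicitly, whereas the paper leaves it implicit.
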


\begin{proof}
If \(\widehat\theta_{B,\ell}\) were an isomorphism, then
\(\theta_{B,\ell}\) would also be an isomorphism because
\(v_{B,\ell}\) is an isomorphism. This would make \(\theta_B\) a
split epimorphism, a contradiction.
\end{proof}

\begin{definition}\label{def:componentchain}
A \emph{chain of components of the chosen right almost split morphisms} of
length \(m\geq0\) is a sequence
\begin{equation*}
Z_0\xrightarrow{f_1}Z_1\xrightarrow{f_2}\cdots
\xrightarrow{f_m}Z_m
\end{equation*}
with \(Z_i\in\cI\) for all \(i\), such that for every
\(1\leq i\leq m\) there is some \(\ell\) with
\(Z_{i-1}=\widehat E_{Z_i,\ell}\) and
\(f_i=\widehat\theta_{Z_i,\ell}\).
\end{definition}

\begin{lemma}\label{lem:gradablechain}
If such a chain \(Z_0\to\cdots\to Z_m\) ends at a gradable
module \(Z_m\), then every \(Z_i\) is gradable.
\end{lemma}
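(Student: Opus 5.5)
We argue by descending induction on \(i\), from \(i=m\) down to \(i=0\). The case \(i=m\) is the hypothesis. For the inductive step, suppose \(Z_i\) is gradable with \(1\leq i\leq m\). By Definition~\ref{def:componentchain}, there is an index \(\ell\) with \(Z_{i-1}=\widehat E_{Z_i,\ell}\). We must show that this module is gradable.

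Since \(Z_i\in\cI\) is gradable, Convention~\ref{conv} chose the right almost split morphism \(\theta_{Z_i}\colon E_{Z_i}\to Z_i\) in \(\Gproj\Lambda\) with gradable domain \(E_{Z_i}\). The existence of such a choice is guaranteed by Proposition~\ref{prop:rasGproj}. The module \(E_{Z_i,\ell}\) is a direct summand of \(E_{Z_i}=\bigoplus_{\ell}E_{Z_i,\ell}\). By \ref{GG:indec}, direct summands of gradable modules are gradable, so \(E_{Z_i,\ell}\) is gradable.

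Finally, \(Z_{i-1}=\widehat E_{Z_i,\ell}\) is isomorphic to \(E_{Z_i,\ell}\) via \(v_{Z_i,\ell}\). Gradability is invariant under isomorphism, since it is defined as \(M\cong FX\) for some \(X\in\gr\Lambda\). Hence \(Z_{i-1}\) is gradable, which completes the induction.

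There is no real obstacle here. The only point needing care is that the convention ties the choice of \(\theta_B\) to the representative \(B\in\cI\) itself. The hypothesis ``\(Z_i\) gradable'' therefore directly triggers the gradable choice of \(E_{Z_i}\), and no comparison between different isomorphic targets is needed.
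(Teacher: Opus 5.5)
Your proof is correct and follows the paper's argument essentially verbatim. Both use descending induction, the gradability of \(E_{Z_i}\) guaranteed by Convention~\ref{conv}, passage to the summand \(E_{Z_i,\ell}\) via \ref{GG:indec}, and invariance of gradability under the isomorphism \(v_{Z_i,\ell}\).
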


\begin{proof}
Suppose \(Z_i\) is gradable. By Convention~\ref{conv}, \(E_{Z_i}\)
is gradable. Hence each direct summand \(E_{Z_i,\ell}\) is gradable.
Since \(\widehat E_{Z_i,\ell}\cong E_{Z_i,\ell}\), the
representative \(\widehat E_{Z_i,\ell}\) is also gradable.
Therefore \(Z_{i-1}\) is gradable, and the result follows by
descending induction.
\end{proof}


\subsection{The expansion lemma}
The following lemma provides a finite expansion of a
non-isomorphism between indecomposable Gorenstein-projective
modules in terms of compositions of components of the chosen
right almost split morphisms. The expansion will allow us to
apply the Harada--Sai lemma to sufficiently long compositions
while retaining control over the shorter ones.
\begin{lemma}\label{lem:expansion}
Let \(M,P\in\cI\) and let \(f\colon M\to P\) be a non-isomorphism.
For every \(n\geq1\), there is a finite family
\((c_k,h_k)_{k\in K}\) such that
\(f=\sum_{k\in K}c_kh_k\) and, for every \(k\in K\):
\begin{enumerate}[label=\textup{(\roman*)}]
\item \(c_k\) is the composite of such a chain
\(Z^{(k)}_0\to\cdots\to Z^{(k)}_{m_k}=P\), with
\(1\leq m_k\leq n\);
\item \(h_k\colon M\to Z^{(k)}_0\), and \(h_k\) is an isomorphism
whenever \(m_k<n\).
\end{enumerate}
\end{lemma}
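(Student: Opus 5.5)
We argue by induction on \(n\geq1\), repeatedly factoring non-isomorphisms through the chosen right almost split morphisms of Convention~\ref{conv}.

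\textbf{Base case \(n=1\).} Since \(M\) and \(P\) are indecomposable and \(f\) is not an isomorphism, \(f\) is not a split epimorphism. Indeed, a split epimorphism onto the indecomposable \(P\) from the indecomposable \(M\) would be an isomorphism. As \(M\in\Gproj\Lambda\), the morphism \(f\) factors through \(\theta_P\colon E_P\to P\), say \(f=\theta_P g\). Using the decomposition \(E_P=\bigoplus_\ell E_{P,\ell}\), write \(g=(g_\ell)_\ell\) with \(g_\ell\colon M\to E_{P,\ell}\). Then
\[
f=\sum_{\ell=1}^{s_P}\theta_{P,\ell}g_\ell=\sum_{\ell=1}^{s_P}\widehat\theta_{P,\ell}\,(v_{P,\ell}^{-1}g_\ell).
\]
Each \(\widehat\theta_{P,\ell}\) is the composite of a chain of length \(m=1=n\) ending at \(P\), with \(Z_0=\widehat E_{P,\ell}\). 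Put \(h_\ell=v_{P,\ell}^{-1}g_\ell\). Condition (ii) is vacuous here, since \(m_k=n\).

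\textbf{Inductive step.} Suppose \(f=\sum_k c_kh_k\) is an expansion for \(n\). Terms with \(m_k<n\) already satisfy the conditions for \(n+1\): they have \(m_k<n+1\) and \(h_k\) is an isomorphism. Now take a term with \(m_k=n\).
\begin{itemize}
\item If \(h_k\colon M\to Z_0^{(k)}\) is an isomorphism, keep the term unchanged. It satisfies (i) and (ii) for \(n+1\), since \(m_k=n<n+1\) and \(h_k\) is an isomorphism.
\item If \(h_k\) is not an isomorphism, it is a non-isomorphism between the members \(M\) and \(Z_0^{(k)}\) of \(\cI\). Apply the base case to \(h_k\), writing \(h_k=\sum_\ell \widehat\theta_{Z_0^{(k)},\ell}\,h'_{k,\ell}\). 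Substituting gives
\[
c_kh_k=\sum_\ell \bigl(c_k\widehat\theta_{Z_0^{(k)},\ell}\bigr)h'_{k,\ell}.
\]
Each \(c_k\widehat\theta_{Z_0^{(k)},\ell}\) is the composite of the chain obtained by prepending \(\widehat E_{Z_0^{(k)},\ell}\to Z_0^{(k)}\) to the previous chain. This new chain has length \(n+1\) and still ends at \(P\), so (i) holds, and (ii) imposes no condition on \(h'_{k,\ell}\).
\end{itemize}
Collecting all terms gives a finite family for \(n+1\).

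The only point needing care is the observation used in both steps: a non-isomorphism between indecomposable modules in \(\cI\) is not a split epimorphism, so it factors through the right almost split morphism in \(\Gproj\Lambda\). Beyond that, it is routine bookkeeping that the chain conditions of Definition~\ref{def:componentchain} hold with the fixed representatives and the isomorphisms \(v_{B,\ell}\). The argument uses neither the gradability hypotheses nor Lemma~\ref{lem:noniso}.
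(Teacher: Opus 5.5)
Your proof is correct and follows the paper's argument essentially verbatim. In the base case you factor \(f\) through \(\theta_P\) and re-index via \(v_{P,\ell}^{-1}\); in the inductive step you keep the terms with \(m_k<n\) or with \(h_k\) an isomorphism, and expand each remaining \(h_k\) through \(\theta_{Z_0^{(k)}}\) to get chains of length \(n+1\).
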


\begin{proof}
Since \(M\) and \(P\) are indecomposable, \(f\) is not a split
epimorphism. We proceed by induction on \(n\).

For \(n=1\), the right almost split property of
\(\theta_P\colon E_P\to P\) yields a factorization
\(f=\theta_Ph\). Writing \(h=(h_\ell)_\ell\) and setting
\(\widehat h_\ell=v_{P,\ell}^{-1}h_\ell\), we obtain
\[
f=\sum_\ell
\widehat\theta_{P,\ell}\widehat h_\ell,
\]
which gives the required expansion.

Suppose the assertion holds for \(n\), and write
\(f=\sum_{k\in K}c_kh_k\) accordingly. Terms with
\(m_k<n\), or with \(m_k=n\) and \(h_k\) an isomorphism,
are retained. For each remaining term, \(m_k=n\) and
\(h_k\colon M\to Z^{(k)}_0\) is a non-isomorphism.
By the right almost split property of
\(\theta_{Z^{(k)}_0}\), we may write
\[
h_k=\sum_\ell
\widehat\theta_{Z^{(k)}_0,\ell}h_{k,\ell},
\qquad
h_{k,\ell}\colon M\to
\widehat E_{Z^{(k)}_0,\ell}.
\]
Consequently,
\[
c_kh_k=\sum_\ell
c_k\widehat\theta_{Z^{(k)}_0,\ell}h_{k,\ell},
\]
where each new composite is a chain of length \(n+1\).
The resulting finite expansion satisfies both conditions
for \(n+1\), completing the induction.
\end{proof}


\subsection{Proof of the Main Theorem}\label{sec:main}

Let
\(\Gproj_\infty(\Lambda)=
\Gproj\Lambda\cap\md_\infty(\Lambda)\) and
\(\Gproj_d(\Lambda)=
\Gproj\Lambda\cap\md_d(\Lambda)\) for \(d\geq0\). Put
\(\GG=\sup\{\grl X:
X\in\Gproj^{\Z}\Lambda\text{ indecomposable}\}\).

We first prove the key bounded-length statement.

\begin{proposition}\label{prop:key}
Suppose there is \(b\geq1\) such that \(\ell(M)\leq b\) for every
indecomposable gradable \(M\in\Gproj\Lambda\). Then every
indecomposable non-projective \(M\in\Gproj\Lambda\) is isomorphic to
the initial term of a chain of components of the chosen right almost split
morphisms of length at most
\(2^b-2\) ending at an indecomposable projective module. In
particular, every Gorenstein-projective \(\Lambda\)-module is
gradable, and every indecomposable Gorenstein-projective module has
length at most \(b\).
\end{proposition}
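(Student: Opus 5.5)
The plan is to reduce everything to the expansion lemma (Lemma~\ref{lem:expansion}) and the Harada--Sai lemma (Lemma~\ref{lem:HS}), with $n=2^b-1$. Without loss of generality $M\in\cI$, and we assume $M$ is indecomposable and non-projective.

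First I produce a suitable morphism out of $M$. By \ref{G:embed}, $M$ embeds into a projective module. Projecting onto indecomposable summands gives a nonzero morphism $f\colon M\to P$ with $P$ indecomposable projective, and I replace $P$ by its representative in $\cI$. Since $M$ is not projective, $f$ is not an isomorphism.

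Next I check that $P$ is gradable. The algebra ${}_\Lambda\Lambda=F(\Lambda)$ is gradable, and $P$ is isomorphic to a direct summand of it, so $P$ is gradable by \ref{GG:indec}. Consequently, by Lemma~\ref{lem:gradablechain}, every term of every chain of components ending at $P$ is gradable. By hypothesis each such term therefore has length at most $b$.

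Now apply Lemma~\ref{lem:expansion} to $f$ with $n=2^b-1$, which gives $f=\sum_{k\in K}c_kh_k$. I split the terms into two kinds.

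\begin{enumerate}[label=\textup{(\alph*)}]
\item Suppose $m_k=n$. Then $c_k$ is a composite of $2^b-1$ morphisms $\widehat\theta_{Z_i,\ell}$. Each of these is a non-isomorphism by Lemma~\ref{lem:noniso}, and each goes between indecomposable modules of length at most $b$. Lemma~\ref{lem:HS} therefore gives $c_k=0$.
\item Otherwise $m_k<n$, and then $h_k$ is an isomorphism by condition (ii) of Lemma~\ref{lem:expansion}.
\end{enumerate}

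Since $f\neq0$, at least one term of the second kind must occur. For that term, $M\cong Z^{(k)}_0$, and $Z^{(k)}_0$ is the initial term of a chain of length $m_k\leq 2^b-2$ ending at the indecomposable projective $P$. This is the main assertion.

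For the consequences, Lemma~\ref{lem:gradablechain} shows that $Z^{(k)}_0$ is gradable, hence so is $M$. Then $\ell(M)\leq b$ by hypothesis. Indecomposable projective modules are gradable by the argument above, so they also satisfy the length bound. An arbitrary Gorenstein-projective module is a finite direct sum of indecomposable Gorenstein-projective modules (using \ref{G:closed}). Each summand is gradable, so the sum is gradable by \ref{GG:indec}.

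The delicate step is (a). The length bound is assumed only for gradable modules, so Harada--Sai can be applied only because Lemma~\ref{lem:gradablechain} propagates gradability backwards from the projective endpoint $P$. For this reason the whole argument must be run inside chains ending at a projective module, which is known to be gradable from the outset.
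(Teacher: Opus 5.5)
Your proposal is correct and follows the paper's proof essentially step for step: a nonzero non-isomorphism $f\colon M\to P$ into an indecomposable projective, the expansion lemma with $n=2^b-1$, gradability propagated back from $P$ so that Harada--Sai kills every term with $m_k=n$, and $f\neq 0$ forcing a term with $h_k$ an isomorphism. The only difference is that the paper treats $b=1$ separately, whereas your argument covers it implicitly: then no term with $m_k<n$ can exist, so $f\neq 0$ gives a contradiction and there are no indecomposable non-projective Gorenstein-projective modules at all.
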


\begin{proof}
Let \(M\) be an indecomposable non-projective object of
\(\Gproj\Lambda\), and let \(M_0\in\cI\) be its representative. It
is enough to prove the assertion for \(M_0\).

By \ref{G:embed}, \(M_0\) embeds into a projective module.
Decomposing the latter into indecomposable projectives, some
component gives a nonzero morphism \(M_0\to P'\), where \(P'\) is
indecomposable projective. Let \(P\in\cI\) be the representative of
\(P'\). Since projectivity is invariant under isomorphism, \(P\) is
projective. Composing with an isomorphism \(P'\to P\), we obtain a
nonzero morphism \(f\colon M_0\to P\). Since \(M_0\) is
non-projective, \(f\) is not an isomorphism.

Set \(n=2^b-1\) and apply Lemma~\ref{lem:expansion}. Thus
\(f=\sum_{k\in K}c_kh_k\), where \(c_k\) is the composite of such a chain
\(Z^{(k)}_0\to\cdots\to Z^{(k)}_{m_k}=P\), with
\(1\leq m_k\leq n\).

The module \(P\) is gradable. Hence all modules in these chains are
gradable by Lemma~\ref{lem:gradablechain}. By hypothesis they have
length at most \(b\). Moreover, every arrow in such a chain
is a non-isomorphism by Lemma~\ref{lem:noniso}. Therefore
\(c_k=0\) whenever \(m_k=n=2^b-1\), by the Harada--Sai lemma.

If \(b=1\), then \(n=1\), and hence \(m_k=1\) for every \(k\).
Therefore \(c_k=0\) for all \(k\), so
\(f=\sum_{k\in K}c_kh_k=0\), contradicting \(f\neq0\). Thus, if an
indecomposable non-projective Gorenstein-projective module exists,
then \(b\geq2\).

Assume therefore that \(b\geq2\). Since \(f\neq0\) and
\(c_k=0\) whenever \(m_k=n\), there is some \(k\in K\) such that
\(m_k<n\) and \(c_kh_k\neq0\). By Lemma~\ref{lem:expansion},
\(h_k\colon M_0\to Z^{(k)}_0\) is an isomorphism. Hence \(M_0\),
and therefore \(M\), is isomorphic to the initial term of such a chain
of length at most \(2^b-2\) ending at the
indecomposable projective module \(P\). Lemma~\ref{lem:gradablechain}
then shows that \(M\) is gradable, and consequently
\(\ell(M)\leq b\).

Indecomposable projective modules are gradable as well. Thus every
indecomposable Gorenstein-projective module is gradable and has
length at most \(b\). Since \(\md\Lambda\) is Krull--Schmidt, every
Gorenstein-projective module is a finite direct sum of
indecomposable Gorenstein-projective modules, and is therefore
gradable.
\end{proof}

We can now prove the main result.

\begin{theorem}\label{thm:main}
Let \(\Lambda\) be a graded Artin algebra which is
Iwanaga--Gorenstein. The following conditions are equivalent:
\begin{enumerate}[label=\textup{(\arabic*)}]
\item \(\Lambda\) is CM-finite.
\item \(\Gproj_\infty(\Lambda)\) has finite type.
\item There is \(b\geq1\) such that \(\ell(M)\leq b\) for every
indecomposable gradable \(M\in\Gproj\Lambda\).
\item \(\GG<\infty\) and \(\Gproj_d(\Lambda)\) has finite type for
every \(d\geq0\).
\end{enumerate}
\end{theorem}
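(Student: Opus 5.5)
I will prove the cycle of implications \((1)\Rightarrow(2)\Rightarrow(3)\Rightarrow(1)\) and then attach \((4)\) through \((3)\Rightarrow(4)\Rightarrow(2)\).

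\emph{Step 1: \((1)\Rightarrow(2)\).} This is immediate, since \(\Gproj_\infty(\Lambda)\) is a full subcategory of \(\Gproj\Lambda\). Every indecomposable of \(\Gproj_\infty(\Lambda)\) is an indecomposable of \(\Gproj\Lambda\), and by \ref{GG:indec} indecomposability is the same notion in both.

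\emph{Step 2: \((2)\Rightarrow(3)\).} If there are only finitely many indecomposable gradable Gorenstein-projective modules up to isomorphism, I take \(b\) to be the maximum of their lengths. The maximum is at least \(1\) because \(\Lambda\) itself is gradable and projective.

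\emph{Step 3: \((3)\Rightarrow(1)\).} This is the substantial step, and it is already done by Proposition~\ref{prop:key}. Under \((3)\), every indecomposable non-projective \(M\in\Gproj\Lambda\) is isomorphic to the initial term of a chain of length at most \(2^b-2\) ending at an indecomposable projective \(P\in\cI\). There are finitely many such \(P\). Each \(\theta_B\) has finitely many components \(s_B\). Hence the number of chains of bounded length ending at a given \(B\) is finite, by induction on the length. So the initial terms form a finite set, and together with the finitely many indecomposable projectives this gives CM-finiteness. The count of chains is the only point needing a little care. It works because a chain is determined, step by step from the end, by the choice of \(\ell\le s_{Z_i}\).

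\emph{Step 4: \((3)\Rightarrow(4)\).} Under \((3)\), every indecomposable \(X\in\Gproj^{\Z}\Lambda\) has \(\ell(FX)\le b\), by Proposition~\ref{prop:gradedGproj}. Then \ref{GG:length} gives \(\grl X\le b\grl\Lambda\), so \(\GG<\infty\). By Steps 2 and 3 we already have \((1)\), and each \(\Gproj_d(\Lambda)\subseteq\Gproj\Lambda\) then has finite type.

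\emph{Step 5: \((4)\Rightarrow(2)\).} Let \(d=\GG-1\), which is finite. Take an indecomposable gradable \(M\in\Gproj\Lambda\), say \(M\cong FX\) with \(X\) indecomposable in \(\Gproj^{\Z}\Lambda\). Shifting, I may assume \(\lb X=0\), so \(\ub X=\grl X-1\le d\). Then \(X\in\gr^d(\Lambda)\), and \(F(X\langle -\lb X\rangle)\cong M\) by \ref{GG:shift}. Hence \(M\in\Gproj_d(\Lambda)\). Every indecomposable of \(\Gproj_\infty(\Lambda)\) therefore lies in the finite-type category \(\Gproj_d(\Lambda)\), which gives \((2)\).

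The main obstacle in the whole argument is \((3)\Rightarrow(1)\), and its real content is Proposition~\ref{prop:key}. What remains is the finiteness count of chains and the routine bookkeeping with shifts in Step 5.
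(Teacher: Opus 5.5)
Your proof is correct and follows the paper's argument. The substantive implication \((3)\Rightarrow(1)\) is exactly Proposition~\ref{prop:key} plus the finite count of component chains of bounded length ending at the finitely many indecomposable projectives (the paper's sets \(\cN_r\)), and you handle \((4)\) with the same two ingredients as the paper, \ref{GG:length} and the normalization \(\lb X=0\). The only difference is the order of implications: the paper runs \((2)\Rightarrow(4)\Rightarrow(3)\) and obtains finite type of \(\Gproj_d(\Lambda)\) directly from \(\Gproj_d(\Lambda)\subseteq\Gproj_\infty(\Lambda)\), whereas your \((3)\Rightarrow(4)\) uses \((3)\Rightarrow(1)\) for that clause, so you should cite Step~3 alone there rather than Steps~2 and~3.
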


\begin{proof}
\textup{(1)}\(\Rightarrow\)\textup{(2)} is immediate.

\textup{(2)}\(\Rightarrow\)\textup{(4)}.
Let \(b\) be the maximum of the lengths of the finitely many
indecomposable objects of \(\Gproj_\infty(\Lambda)\). If
\(X\in\Gproj^{\Z}\Lambda\) is indecomposable, then \(FX\) is an
indecomposable gradable Gorenstein-projective module, so
\(\ell(FX)\leq b\). By \ref{GG:length},
\(\grl X\leq b\grl\Lambda\). Since \(\grl\Lambda<\infty\), we
obtain \(\GG<\infty\). Moreover,
\(\Gproj_d(\Lambda)\subseteq\Gproj_\infty(\Lambda)\) for every
\(d\geq0\). Hence each \(\Gproj_d(\Lambda)\) has finite type.

\textup{(4)}\(\Rightarrow\)\textup{(3)}.
Put \(G=\GG\), and let \(M\) be an indecomposable gradable
Gorenstein-projective module. By Proposition
\ref{prop:gradedGproj}, there is an indecomposable
\(X\in\Gproj^{\Z}\Lambda\) with \(M\cong FX\). After replacing
\(X\) by a shift, we may assume \(\lb X=0\). Since
\(\grl X\leq G\), one has \(\ub X\leq G-1\). Thus
\(M\in\Gproj_{G-1}(\Lambda)\). By assumption this category has
finite type, so the lengths of its indecomposable objects are
bounded. This proves \textup{(3)}.

\textup{(3)}\(\Rightarrow\)\textup{(1)}.
Let \(b\) be as in \textup{(3)}. For \(r\geq0\), let
\(\cN_r\) be the subset of \(\cI\) consisting of the modules which
occur as the initial term of such a chain of length at most
\(r\) ending at an indecomposable projective module.

We prove by induction on \(r\) that \(\cN_r\) is finite. The set
\(\cN_0\) consists of the representatives in \(\cI\) which are
projective. Since an Artin algebra has only finitely many
indecomposable projective modules, \(\cN_0\) is finite.

Suppose \(\cN_r\) is finite and let \(B\in\cN_{r+1}\). If
\(B\in\cN_r\), there is nothing to consider. Otherwise there is a chain
\begin{equation*}
B=Z_0\longrightarrow Z_1\longrightarrow\cdots
\longrightarrow Z_{r+1}
\end{equation*}
ending at an indecomposable projective module. The tail
\(Z_1\to\cdots\to Z_{r+1}\) is a chain of the form in Definition~\ref{def:componentchain} of length \(r\),
so \(Z_1\in\cN_r\). By Definition~\ref{def:componentchain},
\(B=\widehat E_{Z_1,\ell}\) for some
\(1\leq\ell\leq s_{Z_1}\). Consequently,
\begin{equation*}
\cN_{r+1}\subseteq
\cN_r\cup
\{\widehat E_{C,\ell}:
C\in\cN_r,\ 1\leq\ell\leq s_C\}.
\end{equation*}
The set on the right is finite because \(\cN_r\) is finite and
each \(E_C\) has only finitely many indecomposable summands. Thus
\(\cN_{r+1}\) is finite.

By Proposition~\ref{prop:key}, every indecomposable non-projective
Gorenstein-projective module is isomorphic to the initial term of a chain of components of the chosen
right almost split morphisms of length at most \(2^b-2\) ending at an
indecomposable projective module. Since the initial terms of
these chains belong to \(\cI\), the representative of every
indecomposable non-projective Gorenstein-projective module belongs
to \(\cN_{2^b-2}\). The indecomposable projectives already belong
to \(\cN_0\subseteq\cN_{2^b-2}\). Hence
\(\cI=\cN_{2^b-2}\).

The set \(\cN_{2^b-2}\) is finite. Therefore \(\cI\) is finite,
and \(\Lambda\) is CM-finite.
\end{proof}

\begin{corollary}\label{cor:allgradable}
If \(\Lambda\) is CM-finite, then every Gorenstein-projective
\(\Lambda\)-module is gradable and \(
\underline{\Gproj}\Lambda
\simeq
\underline{\Gproj}^{\Z}\Lambda/\langle 1\rangle\
\).
\end{corollary}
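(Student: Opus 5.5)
The first assertion comes straight from Proposition~\ref{prop:key}. If \(\Lambda\) is CM-finite, then \(\cI\) is finite. Let \(b\) be the maximum of the lengths of the modules in \(\cI\); then every indecomposable gradable Gorenstein-projective module has length at most \(b\). Proposition~\ref{prop:key} then says that every Gorenstein-projective module is gradable, so \(\Gproj_\infty(\Lambda)=\Gproj\Lambda\). By Proposition~\ref{prop:gradedGproj}, the restriction \(F\colon\Gproj^{\Z}\Lambda\to\Gproj\Lambda\) is dense.

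For the orbit category statement, I first pass to stable categories. Because \(F\) preserves projectives (GG6) and is exact, it sends morphisms factoring through projectives to morphisms factoring through projectives. It therefore induces a functor \(\underline F\colon\underline{\Gproj}^{\Z}\Lambda\to\underline{\Gproj}\Lambda\). Since \(F(X\langle1\rangle)=FX\), the natural identifications give \(\underline F\) the structure of a functor from the orbit category. In the orbit category \(\underline{\Gproj}^{\Z}\Lambda/\langle1\rangle\), the objects are those of \(\underline{\Gproj}^{\Z}\Lambda\), and the morphisms are
\[
\bigoplus_{i\in\Z}\underline{\Hom}_{\gr\Lambda}(X,Y\langle i\rangle).
\]
The induced functor from the orbit category is dense by the first assertion. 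It remains to show that it is fully faithful, i.e.\ that
\[
\bigoplus_{i}\underline{\Hom}_{\gr\Lambda}(X,Y\langle i\rangle)\to\underline{\Hom}_\Lambda(FX,FY)
\]
is bijective.

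Before taking stable quotients, (GG4) already gives a bijection \(\bigoplus_i\Hom_{\gr\Lambda}(X,Y\langle i\rangle)\cong\Hom_\Lambda(FX,FY)\). Hom-finiteness is used here: only finitely many \(i\) give nonzero terms. The step I expect to be the main obstacle is showing that the projective-factoring submodules correspond. That is, a morphism \(g=\sum_i F(g_i)\) factors through a projective \(\Lambda\)-module if and only if each \(g_i\) factors through a graded projective. I would prove this by taking a graded projective cover \(p\colon P\to Y\). Then \(g\) factors through a projective iff it factors through \(Fp\), since \(Fp\) is a projective cover by (GG6). Applying (GG4) to \(\Hom(FX,FP)\), write a lift \(h\) with \(g=(Fp)h\) as a sum of homogeneous components \(h=\sum_i F(h_i)\). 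The morphism \(p\) is degree preserving, so comparing degree-\(i\) parts gives \(g_i=p\langle i\rangle h_i\) up to the appropriate shift convention. The converse direction is clear.

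This yields the equivalence \(\underline{\Gproj}\Lambda\simeq\underline{\Gproj}^{\Z}\Lambda/\langle1\rangle\). I would take the orbit category in the additive sense above, and if necessary pass to its idempotent completion; density up to summands is covered by (GG2) together with Krull--Schmidt.
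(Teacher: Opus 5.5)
Your argument for the first assertion is the paper's: CM-finiteness bounds the lengths of the indecomposable Gorenstein-projective modules, and Proposition~\ref{prop:key} then gives gradability. For the equivalence \(\underline{\Gproj}\Lambda\simeq\underline{\Gproj}^{\Z}\Lambda/\langle1\rangle\), the paper's proof gives no argument beyond that invocation and treats the orbit-category statement as an immediate consequence of gradability. You actually prove it, via the standard covering-functor argument, and that argument is correct.

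The key step is that \(g=\sum_iF(g_i)\) factors through a projective if and only if every \(g_i\) factors through a graded projective. It works as you describe: \(g\) factors through \(Fp\) for any graded epimorphism \(p\colon P\to Y\) with \(P\) graded projective, so a projective cover is more than you need. Uniqueness of homogeneous decompositions then forces \(g_i=p\langle i\rangle h_i\).

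What your route makes visible is that full faithfulness of the functor from the orbit category needs no finiteness assumption at all. CM-finiteness enters only through density, i.e.\ through the first assertion.

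Two minor points. First, (GG4) as stated only gives surjectivity of \(\bigoplus_i\Hom_{\gr\Lambda}(X,Y\langle i\rangle)\to\Hom_\Lambda(FX,FY)\). Injectivity comes from faithfulness of \(F\) together with directness of the homogeneous decomposition, which you use anyway when comparing degree-\(i\) parts. Second, the hedge about idempotent completion is unnecessary: once every Gorenstein-projective module is gradable, the functor is dense on the nose.
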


\begin{proof}
The lengths of the indecomposable Gorenstein-projective modules are
bounded, so Proposition~\ref{prop:key} applies.
\end{proof}
We now give a different argument for the gradability of
Gorenstein-projective modules in the CM-finite case. Over an
algebraically closed field, this argument does not require the
Iwanaga--Gorenstein hypothesis and follows directly from a result
of Amiot and Oppermann \cite[Theorem~4.2]{AO}.

\begin{setting}\label{setting}
Let \(k\) be an algebraically closed field and let
\(\Lambda=\bigoplus_{i\in\Z}\Lambda_i\) be a finite-dimensional
graded \(k\)-algebra. No Gorenstein hypothesis is imposed in this
section.
\end{setting}

For \(s\in k^*\), let \(\alpha_s\colon\Lambda\to\Lambda\) be the
algebra automorphism defined by
\(\alpha_s(\lambda)=s^i\lambda\) for \(\lambda\in\Lambda_i\).
For \(M\in\md\Lambda\), denote by \({}^sM\) the corresponding
twist of \(M\).

\begin{lemma}\label{lem:twistfunctor}
For every \(s\in k^*\), the assignment
\(T_s(M)={}^sM\) defines an exact autoequivalence of
\(\md\Lambda\) which preserves indecomposable and
Gorenstein-projective modules.
\end{lemma}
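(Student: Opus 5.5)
The approach is to realize \(T_s\) as restriction of scalars along the automorphism \(\alpha_s\). First I check that \(\alpha_s\) is an algebra automorphism. It is \(k\)-linear and unital, since \(1\in\Lambda_0\). It is multiplicative because, for homogeneous \(\lambda\in\Lambda_i\) and \(\mu\in\Lambda_j\), we have \(\lambda\mu\in\Lambda_{i+j}\) and \(s^{i+j}=s^is^j\). Its inverse is \(\alpha_{s^{-1}}\). For \(M\in\md\Lambda\), I take \({}^sM\) to be the \(k\)-vector space \(M\) with action \(\lambda\cdot m=\alpha_s(\lambda)m\). A morphism \(f\colon M\to N\) is sent to the same \(k\)-linear map, which is again \(\Lambda\)-linear for the twisted actions. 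This makes \(T_s\) a functor \(\md\Lambda\to\md\Lambda\). It preserves finite generation, because \(\dim_k{}^sM=\dim_kM\).

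Next come the formal properties.

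\(T_s\) is an autoequivalence, indeed an automorphism of categories: since \(\alpha_s\alpha_{s^{-1}}=\mathrm{id}\), the functors \(T_s\) and \(T_{s^{-1}}\) are mutually inverse on the nose.

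\(T_s\) is exact, because it does not change underlying vector spaces or maps. Kernels, images and cokernels are therefore computed identically.

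\(T_s\) preserves indecomposability. It induces an isomorphism of algebras \(\operatorname{End}_\Lambda(M)=\operatorname{End}_\Lambda({}^sM)\) as subsets of \(\operatorname{End}_k(M)\). Alternatively, any autoequivalence preserves indecomposable objects, since direct sum decompositions correspond.

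For Gorenstein-projectivity, no Gorenstein hypothesis is available, so I use the general definition: \(M\) is Gorenstein-projective if it is a syzygy in a totally acyclic complex of projectives. Equivalently, \(M\) admits an exact complex \(P^\bullet\) of finitely generated projectives with \(M\) as a syzygy and \(\Hom_\Lambda(P^\bullet,\Lambda)\) exact. I verify three things in order.

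(a) \(T_s\) sends projectives to projectives. It is an exact autoequivalence, and projectivity is characterized categorically. Concretely, \(\lambda\mapsto\alpha_s^{-1}(\lambda)\) gives an isomorphism \({}^s\Lambda\cong\Lambda\), so \(T_s\) preserves free modules and their summands.

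(b) Exactness of \(T_sP^\bullet\) is clear from exactness of \(T_s\).

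(c) The dual complex stays exact. Since \(T_s\) is an equivalence, \(\Hom_\Lambda(P^\bullet,\Lambda)\cong\Hom_\Lambda(T_sP^\bullet,T_s\Lambda)\). Then \(T_s\Lambda\cong\Lambda\) gives \(\Hom_\Lambda(T_sP^\bullet,\Lambda)\), which is therefore exact. This isomorphism is of abelian groups, which suffices for exactness.

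Hence \(T_sM\), being a syzygy of \(T_sP^\bullet\), is Gorenstein-projective.

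The only point needing care is step (c), specifically the identification \({}^s\Lambda\cong\Lambda\) as left modules. Here I check directly that \(\alpha_s^{-1}\colon{}^s\Lambda\to\Lambda\) is left \(\Lambda\)-linear: \(\alpha_s^{-1}(\alpha_s(\lambda)x)=\lambda\,\alpha_s^{-1}(x)\). Everything else is routine transport of structure along an algebra automorphism. In the Iwanaga--Gorenstein case one could instead use \ref{G:char} directly via \(\Ext^i_\Lambda({}^sM,\Lambda)\cong\Ext^i_\Lambda(M,{}^{s^{-1}}\Lambda)\cong\Ext^i_\Lambda(M,\Lambda)\), but the totally acyclic argument covers Setting~\ref{setting}.
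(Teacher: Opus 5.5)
Your proof is correct and follows essentially the same route as the paper. In both, $T_s$ is restriction of scalars along $\alpha_s$, hence an exact autoequivalence with inverse $T_{s^{-1}}$, and total acyclicity transfers because $T_s$ preserves projectives and homomorphisms into projectives. The only cosmetic difference is that the paper uses $\Hom_\Lambda(T_sP^\bullet,Q)\cong\Hom_\Lambda(P^\bullet,T_{s^{-1}}Q)$ for arbitrary projective $Q$, whereas you test only against $\Lambda$ via ${}^s\Lambda\cong\Lambda$; this is equivalent for finitely generated modules.
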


\begin{proof}
Since \(\alpha_s\) is an algebra automorphism, \(T_s\) is an exact
autoequivalence with inverse \(T_{s^{-1}}\). In particular, it
preserves indecomposable modules and projective modules.

If \(P^\bullet\) is a totally acyclic complex of projective
modules, then \(T_sP^\bullet\) is an exact complex of projective
modules. Moreover, for every projective module \(Q\), there is a
natural isomorphism
\begin{equation*}
\Hom_\Lambda(T_sP^\bullet,Q)
\cong
\Hom_\Lambda(P^\bullet,T_{s^{-1}}Q).
\end{equation*}
Since \(T_{s^{-1}}Q\) is projective, the complex on the right is
exact. Thus \(T_sP^\bullet\) is totally acyclic, and \(T_s\)
preserves Gorenstein-projective modules.
\end{proof}

\begin{corollary}\label{thm:twistA}
In Setting~\ref{setting}, if \(\Lambda\) is CM-finite, then every
Gorenstein-projective \(\Lambda\)-module is gradable.
\end{corollary}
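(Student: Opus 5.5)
The plan is to use the result of Amiot and Oppermann \cite[Theorem~4.2]{AO}. Over an algebraically closed field, it says that an indecomposable module \(M\) over a finite-dimensional graded algebra \(\Lambda\) is gradable if and only if \({}^sM\cong M\) for all \(s\in k^*\). Equivalently, it suffices that \(M\) be isomorphic to its twists for all \(s\) in a suitable (Zariski-dense, or indeed all of \(k^*\)) set. So it is enough to show that every indecomposable Gorenstein-projective module is fixed up to isomorphism by every twist \(T_s\). The general case then follows from \ref{GG:indec} and Krull--Schmidt, since direct sums of gradable modules are gradable.

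Fix an indecomposable \(M\in\Gproj\Lambda\). By Lemma~\ref{lem:twistfunctor}, each \(T_s\) permutes the finite set \(\cI\) of isomorphism classes of indecomposable Gorenstein-projective modules. Moreover \(T_sT_t\cong T_{st}\), because \(\alpha_s\alpha_t=\alpha_{st}\). We therefore get a group homomorphism \(k^*\to\operatorname{Sym}(\cI)\) into a finite group. Its kernel \(H\) is a subgroup of finite index \(N\), so \(H\supseteq\{s^N: s\in k^*\}\). Since \(k\) is algebraically closed, every element of \(k^*\) is an \(N\)-th power, and hence \(H=k^*\). Thus \({}^sM\cong M\) for every \(s\in k^*\).

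Finally I apply \cite[Theorem~4.2]{AO} to conclude that \(M\) is gradable. The main obstacle is matching the exact hypothesis of Amiot--Oppermann. That result may be phrased for \(\Z\)-gradings via the \(k^*\)-action \(\alpha_s\), possibly with a finiteness or orbit condition. If it requires the module to be fixed only by a dense set of \(s\), or requires only a finite orbit, the argument above still supplies this, since here the orbit is even a single point. I would also check that the twist convention \({}^sM\) agrees with theirs; if it does not, I would replace \(s\) by \(s^{-1}\), which changes nothing. No Gorenstein hypothesis is used anywhere: Lemma~\ref{lem:twistfunctor} relies only on total acyclicity.
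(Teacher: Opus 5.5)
Your argument is correct and follows the same route as the paper: twist-invariance of indecomposable Gorenstein-projective modules (Lemma~\ref{lem:twistfunctor}), then CM-finiteness, then Amiot--Oppermann's Theorem~4.2. The only difference is the finiteness step. The paper uses pigeonhole on the infinite set $k^*$ to get ${}^\alpha G\cong{}^\beta G$ for infinitely many $\beta$, whereas your homomorphism $k^*\to\operatorname{Sym}(\cI)$ together with divisibility of $k^*$ gives the stronger ${}^sM\cong M$ for all $s\in k^*$, which implies the paper's condition and so makes the citation work however its hypothesis is phrased.
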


\begin{proof}
Let \(G\) be an indecomposable Gorenstein-projective
\(\Lambda\)-module. By Lemma~\ref{lem:twistfunctor},
\({}^sG\) is indecomposable and Gorenstein-projective for every
\(s\in k^*\). Since \(\Lambda\) is CM-finite, the family
\(\{{}^sG:s\in k^*\}\) contains only finitely many isomorphism
classes.

Since \(k\) is algebraically closed, \(k^*\) is infinite. Hence
there is some \(\alpha\in k^*\) such that
\({}^\alpha G\cong{}^\beta G\) for infinitely many
\(\beta\in k^*\). By \cite[Theorem~4.2]{AO}, this is impossible
unless \(G\) is gradable. Thus every indecomposable
Gorenstein-projective module is gradable.

Since \(\md\Lambda\) is Krull--Schmidt and finite direct sums of
gradable modules are gradable, every Gorenstein-projective
\(\Lambda\)-module is gradable.
\end{proof}

\bigskip

\noindent
\textsc{A. M. Bouhada}\\
Department of Mathematics, Champlain College Lennoxville,\\
Williams School of Business, Bishop's University,\\
Sherbrooke, Quebec, Canada\\
\textit{Email address:} \texttt{abouhada@ubishops.ca}

\medskip

\noindent
\textsc{M. Laaraj}\\
Department of Mathematics and Computer Science,\\
Hassan II University of Casablanca,\\
Casablanca, Morocco\\
\textit{Email address:} \texttt{laaraj1-etu@etu-univh2c.ma}

\end{document}